\g@addto@macro\bfseries{\boldmath}
\newtheorem{theorem}{Theorem}
\newtheorem{lemma}{Lemma}[section]
\newtheorem{korollar}[lemma]{Corollary}
\newtheorem{proposition}[lemma]{Proposition}
\theoremstyle{definition}
\newtheorem{remark}[lemma]{Remark}
\newtheorem{definition}[lemma]{Definition}
\newtheorem*{notation}{Notation}
\newtheorem*{korollar*}{Corollary}
\newcommand{\G}{\ensuremath{\Gamma}}
\newcommand{\g}{\ensuremath{\gamma}}
\newcommand{\C}{\ensuremath{\mathbb{C}}}
\renewcommand{\H}{\ensuremath{\mathbb{H}}}
\newcommand{\bbS}{\ensuremath{\mathbb{S}}}
\newcommand{\M}{\ensuremath{\mathbb{M}}}
\newcommand{\N}{\ensuremath{\mathbb{N}}}
\newcommand{\mf}{\ensuremath{\mathfrak}}
\newcommand{\mc}{\ensuremath{\mathcal}}
\newcommand{\Z}{\ensuremath{\mathbb{Z}}}
\newcommand{\R}{\ensuremath{\mathbb{R}}}
\newcommand{\inv}{\ensuremath{^{-1}}}
\newcommand{\norm}[1]{\left\|#1\right\|}
\renewcommand{\d}[1][t]{\ensuremath{\left.\frac{d}{d#1}\right|_{#1=0}}}
\renewcommand{\Re}{\ensuremath{\operatorname{Re}}}
\DeclareMathOperator{\res}{Res}
\DeclareMathOperator{\interior}{int}
\DeclareMathOperator{\ext}{ext}
\DeclareMathOperator{\isom}{Isom}
\DeclareMathOperator{\dom}{dom}
\title[Spectral Asymptotics for Kinetic Brownian Motion]{Spectral Asymptotics for Kinetic Brownian Motion on Hyperbolic Surfaces}
\author{Martin Kolb, Tobias Weich, and Lasse L. Wolf}
\email{kolb@math.uni-paderborn.de, weich@math.uni-paderborn.de, llwolf@math.uni-paderborn.de}
\begin{document}
\begin{abstract}
 The kinetic Brownian motion on the sphere bundle of a Riemannian manifold~$\M$ is a stochastic process that models a random perturbation  of the geodesic flow. 
 If $\M$ is a orientable compact constant negatively curved surface, we show that in the limit of infinitely large perturbation the $L^2$-spectrum of the infinitesimal generator of a time rescaled version of the process converges to the Laplace spectrum of the base manifold. In addition, we give explicit error estimates for the convergence to equilibrium. 
 The proofs are based on noncommutative harmonic analysis of $SL_2(\R)$.
\end{abstract}

\maketitle

\section{Introduction}
Kinetic Brownian motion is a stochastic process that describes a stochastic perturbation of the geodesic flow and has the property that the perturbation affects only the direction of the velocity but preserves its absolute value. It has been studied in the past years by several authors in pure mathematics \cite{FLJ07, angst, Li16,alexis, BT18} but versions of this diffusion process have been developed independently as surrogate models for certain textile production processes  (see e.g. \cite{GKMW07,GS13, KSW13}). 

Kinetic Brownian motion $(Y_t^\g)_{t\geq 0}$ in the setting of a compact Riemannian manifold $(\M, g)$ can be informally described in the following way: $(Y_t^\g)_{t\geq 0}$ is a stochastic process with continuous paths described by a stochastic perturbation of the geodesic flow on the sphere bundle $S\M =\{\xi\in T\M, \|\xi\|_g=1\}.$
More precisely, if we denote the geodesic flow vector field by $X$ and the (positive) Laplace operator on the fibers of $S\M$ by $\Delta_\bbS$, then the kinetic Brownian motion is generated by the differential operator 
\[
\widetilde P_\g = -X +\frac 12 \g \Delta_\bbS\colon L^2(S\M)\to L^2(S\M).
\]
The connection to the stochastic process $(Y_t^\g)_{t\geq 0}$ is given via
\[
e^{-t\widetilde P_\g}f(x) = \mathbb E_x[f(Y_t^\g)] \quad\text{with}\quad f\in L^2(S\M), x\in S\M.
\]
Observe that the parameter $\g>0$ controls the strength of the stochastic perturbation and it is a natural question to study the behavior of $\widetilde P_\g$ and $Y_t^\g$ in the regimes $\g\to 0$ as well as $\g\to\infty$. By hypoellipticity of $\widetilde P_\g$ one can show that $\widetilde P_\g$ has discrete $L^2$-spectrum. For negatively curved manifolds, Drouot \cite{alexis} has studied the convergence of this discrete spectrum  of $\widetilde P_\g$ in the limit $\g\to 0$ and has shown that it converges to the Pollicott-Ruelle resonances of the geodesic flow. These resonances are a replacement of the spectrum of $X$ since its $L^2$-spectrum is equal to $i\R$ and they can be defined in various generalities of hyperbolic flows as pole of the meromorphically continued resolvent \cite{Liv04, FS11, DZ16a, DG16, DR16,BW17}.  
In the limit of large random noise Li \cite{Li16} and Angst-Bailleul-Tardif \cite{angst} proved  that $\pi(Y_{\g t}^\g)$ converges weakly to the Brownian motion on $ \M$ with speed 2 as $\g\to \infty$ where $\pi\colon S\M\to \M$ is the projection. 
This rescaled kinetic Brownian motion is generated by $P_\g =\g\widetilde P_\g$ whereas the Brownian motion on the base manifold is generated by the Laplace operator $\frac 12 \Delta_\M$. 
Therefore, one may conjecture that the discrete spectrum of $P_\g$ converges to the Laplace spectrum. 
We will give a proof of this fact including explicit error estimates, in the case of constant negative curvature surfaces:
 \begin{theorem}\label{thm:evofPg}
Let $(\M, g)$ be a orientable  compact surface of constant negative curvature scaled to $\kappa = -1$. For every $\eta\in\sigma(\Delta_\M)$ with multiplicity $n$ there is   an analytic function $\lambda_ \eta \colon ]2\sqrt{4\eta +6},\infty[ \to \C$ such that $\lambda_\eta(\gamma)$ is an eigenvalue of $P_\gamma$ with multiplicity at least $n$ and for every $\g > 2\sqrt{4\eta+6}$ the following estimate holds: 
\begin{equation}\label{eq:thm_error_estimate}
|\lambda_\eta (\gamma)- \eta|\leq \frac {8\eta +12}{\g ((4 \eta+6)^{-1/2}-2\g\inv)}. 
\end{equation}
 A fortiori, $\lambda_\eta (\g) \to \eta$ as $\g\to\infty$.
\end{theorem}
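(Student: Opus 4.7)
The plan is to exploit the symmetries of the hyperbolic surface through the unitary representation theory of $G = PSL_2(\R)$. Since $\M$ is orientable, compact, of constant curvature $-1$, we have $\M = \G\backslash\H^2$ and $S\M = \G\backslash G$ canonically. The operator $P_\gamma$ is built from $G$-invariant data — the geodesic generator $X$ and the fiber Laplacian $\Delta_\bbS = -W^2$ with $W$ generating the maximal compact subgroup $K \cong SO(2)$ — so it preserves the Plancherel decomposition of $L^2(\G\backslash G)$ into irreducible $G$-representations $H_j$. On each $H_j$ the Casimir $\Omega$ acts as a scalar, and the $K$-invariant vectors in $H_j$ project onto Laplace eigenfunctions on $\M$. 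A Laplace eigenvalue $\eta$ of multiplicity $n$ therefore corresponds to exactly $n$ irreducible components sharing the Casimir value associated to $\eta$, and it suffices to analyse $P_\gamma$ on one such component $H_\eta$.

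On $H_\eta$ I would work with the orthonormal $K$-weight basis $(e_k)_{k\in 2\Z}$. Here $\Delta_\bbS$ acts diagonally with eigenvalues $k^2$, while $X = \tfrac12(E_+ + E_-)$ is tridiagonal, coupling $e_k$ to $e_{k\pm 2}$ with explicit matrix coefficients $c_k^\pm$ that depend polynomially on $\eta$ and $k$. Thus $P_\gamma|_{H_\eta}$ becomes a concrete tridiagonal Jacobi-type operator whose diagonal is $\tfrac12\gamma^2 k^2$ and whose off-diagonal entries are of size $\gamma c_k^\pm$, so the task reduces to the perturbation analysis of this explicit operator as $\gamma\to\infty$.

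The relevant eigenvalue is extracted by a Schur (Feshbach) reduction to the one-dimensional $K$-invariant subspace $\C e_0 \subset H_\eta$, with orthogonal projection $P_0$ and complement $Q_0 = I - P_0$. Since $X$ strictly shifts the $K$-weight, $P_0 X P_0 = 0$, while $Q_0 \Delta_\bbS Q_0$ has spectrum bounded below by $4$; hence for $\gamma$ large the block $Q_0(\tfrac12\gamma^2\Delta_\bbS - \gamma X - \lambda)Q_0$ is invertible on $Q_0 H_\eta$, and the eigenvalue equation $(P_\gamma - \lambda)\psi = 0$ reduces to the scalar fixed-point equation
\[
\lambda \;=\; -\gamma^2 \bigl\langle X^* e_0,\; \bigl(Q_0(\tfrac12\gamma^2\Delta_\bbS - \gamma X - \lambda)Q_0\bigr)\inv X e_0\bigr\rangle.
\]
Expanding the resolvent in a Neumann series around the diagonal $\tfrac12\gamma^2\Delta_\bbS$, the leading term is a constant multiple of $\langle X^* e_0, \Delta_\bbS\inv X e_0\rangle$, and the Casimir identity (which on spherical vectors relates $\Omega$ to $X$ and $W$) identifies this limit as exactly $\eta$. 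Applying analytic perturbation theory to the fixed-point equation then produces the analytic family $\lambda_\eta(\gamma)$.

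The main obstacle will be tracking the numerical constants to obtain the exact threshold $\gamma > 2\sqrt{4\eta+6}$ and the bound \eqref{eq:thm_error_estimate}. This rests on sharp estimates for $\|X e_0\|$ and for the operator norm of $X$ on $Q_0 H_\eta$ relative to $\Delta_\bbS^{1/2}$, both of which are polynomial expressions in $\eta$ computable from the matrix coefficients $c_k^\pm$; I expect the factor $\sqrt{4\eta+6}$ to arise as precisely the norm of $\Delta_\bbS^{-1/2}X$ on $Q_0 H_\eta$, which controls the convergence of the Neumann expansion, and the corresponding geometric-series bound then produces the stated estimate. Since the argument applies identically on each of the $n$ irreducibles attached to $\eta$, the multiplicity of $\lambda_\eta(\gamma)$ in $P_\gamma$ is at least $n$.
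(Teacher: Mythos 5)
Your proposal follows the paper's overall strategy: decompose $L^2(S\M) = L^2(\G\backslash G)$ into irreducible unitary $PSL_2(\R)$-representations, match a Laplace eigenvalue $\eta$ of multiplicity $n$ with the $n$ copies of the principal/complementary series representation having Casimir $\lambda_\pi = 4\eta$, and analyse the restriction $P_\gamma|_{\mc H_\pi}$ via the $K$-type decomposition $\mc H_\pi = \widehat\bigoplus_{k\in\Z} V_k$. The reduction to the one-dimensional $K$-fixed subspace $V_0$, the vanishing of the first-order term because $X=H$ strictly shifts $K$-weight, and the Casimir identity $\Omega = 4\Xi^2 - 2(X_+X_- + X_-X_+)$ identifying the leading coefficient as $\lambda_\pi/4 = \eta$ are exactly the paper's computations. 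Where you diverge is in the analytic engine: the paper applies Kato's perturbation theory for holomorphic families of type (A) to $T_\pi(x)=-\Xi^2+xH$ on $\mc H^2_\pi$, with quantitative input the resolvent bound $\|H R_\pi(\zeta)\|\le\sqrt{\lambda_\pi+6}$ on the circle $|\zeta|=1/2$; this yields the analyticity radius $r_\pi=(4\eta+6)^{-1/2}$, and then \eqref{eq:thm_error_estimate} is just Cauchy's inequality applied to the Taylor remainder. Your Feshbach--Schur reduction to a scalar self-consistent equation with a Neumann expansion is a sound and closely related alternative, but the paper's version delivers the exact threshold $\gamma>2\sqrt{4\eta+6}$ and the explicit error bound essentially for free, since the analyticity radius and the contour radius $\rho=1/2$ feed directly into the Cauchy estimate, whereas in the Feshbach version you would additionally need to track the $\lambda$-dependence inside the effective operator, precisely the book-keeping you flag as the main obstacle.

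Two small slips to fix: in the $PSL_2(\R)$ normalization used here the $K$-types are indexed by $k\in\Z$, $\Delta_\bbS=-\Xi^2$ has eigenvalue $k^2$ on $V_k$, and $X=H=-\tfrac12(X_++X_-)$ couples $V_k$ to $V_{k\pm1}$, not $V_{k\pm2}$; your labelling by $2\Z$ is inconsistent with the eigenvalues $k^2$ you quote. Also, the constant $\sqrt{4\eta+6}$ arises as $\sup_{|\zeta|=1/2}\|H R_\pi(\zeta)\|$, which is not the same as the operator norm of $\Delta_\bbS^{-1/2}X$ on the complement of $V_0$; the two are comparable but not equal, and you should be aware that a $\sqrt\eta$ growth in the threshold is genuinely unavoidable (the paper points this out by noting $\|H\phi_0\|=\sqrt{\lambda_\pi/8}$).
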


Another question to ask is whether the kinetic Brownian motion converges to equilibrium, i.e. 
\[\mathbb E_x[f(Y_{\g t}^\g)] \stackrel{t\to\infty}\longrightarrow \int_ {S\M}f.
\]
Baudoin-Tardif \cite{BT18} showed exponential convergence, i.e.
\[\left \|e^{-t  P_\g} f- \int_{S\M} f \right \|\leq C e^{-C_\g t} \left \|f-\int_{S\M}f\right \|, \quad f\in L^2(S\M).\]
We should point out that the given rate $C_\g$ converges to 0 as $\g\to\infty$ but they conjecture that the optimal rate converges to the spectral gap of $\Delta_\M$ which is the smallest non-zero Laplace eigenvalue $\eta_1$ (see \cite[Section 3.1]{BT18}). 
A direct consequence of Theorem \ref{thm:evofPg} shows that the optimal rate $C_\g$ is less than $\Re \lambda_{\eta_1}(\g)$ for  surfaces of constant negative curvature. Hence $\limsup_{\g\to \infty} C_\g\leq \eta_1$.
For a more explicit study of the convergence towards equilibrium we prove the following spectral expansion:
 \begin{theorem}\label{thm:convergencetoequilibrium}
 Let $(\M, g)$ be a orientable  compact surface of constant negative curvature scaled to $\kappa = -1$.
 For all $ \varepsilon > 0$, $ \g > \max \{4\sqrt{4C{\varepsilon} \inv +6} , 4\sqrt{32}\}$, and $f\in H^2(S\M)$ with  $\|f\|_{H^2(S\M)}\leq C$ (for the precise definition of the used Sobolev norm see Section \ref{sec:hyperbolicsurfaces})
 it holds 
 \[ \bigg \|e^{-tP_\g}f -\sum_{\substack{\eta \in \sigma(\Delta_\M)\\  \eta \leq C{\varepsilon}\inv}}e^{-t \lambda_\eta (\g)}\Pi_{\lambda_\eta(\g)} f \bigg \|_{L^2(S\M)}\leq \varepsilon + \frac 8 {\g^2t}e^{-\g^2t/4} \|f\|_{L^2(S\M)}\]
 where $\lambda_\eta(\g)$ is an eigenvalue of $P_\g$ converging to $ \eta$  as $\g\to \infty$ from Theorem \ref{thm:evofPg} and $\Pi_{\lambda_\eta(\g)}$ is a spectral projector for $P_\g$ of operator norm less than 2.
\end{theorem}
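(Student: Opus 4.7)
The plan is to combine the noncommutative harmonic analysis decomposition (already used to establish Theorem \ref{thm:evofPg}) with a contour-integral representation of $e^{-tP_\g}$. Since $\M=\G\backslash\H$, the space $L^2(S\M)$ decomposes as a Hilbert direct sum $\bigoplus_\pi \mc H_\pi$ of closed subspaces carrying irreducible unitary representations of $SL_2(\R)$, and the Casimir acts on $\mc H_\pi$ as a scalar tied to a Laplace eigenvalue $\eta_\pi\in\sigma(\Delta_\M)$. The operator $P_\g = -\g X + \tfrac 12\g^2\Delta_\bbS$ preserves this decomposition; a further decomposition of each $\mc H_\pi$ into $K$-isotypic pieces turns $P_\g|_{\mc H_\pi}$ into a tridiagonal Jacobi-type operator whose lowest-lying eigenvalue is exactly the $\lambda_{\eta_\pi}(\g)$ of Theorem \ref{thm:evofPg}.

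I would represent the semigroup via a Dunford--Cauchy integral
\[
e^{-tP_\g}f = \frac{1}{2\pi i}\int_\G e^{-tz}(z-P_\g)\inv f\, dz
\]
along a contour $\G$ that encloses the low-lying eigenvalues $\{\lambda_\eta(\g):\eta\le C\varepsilon\inv\}$ as small loops and is pushed out to the vertical line $\{\Re z = \g^2/4\}$ in its bulk part. Deforming and collecting residues produces
\[
e^{-tP_\g}f = \sum_{\substack{\eta\in\sigma(\Delta_\M)\\ \eta\le C\varepsilon\inv}} e^{-t\lambda_\eta(\g)}\Pi_{\lambda_\eta(\g)} f + R_{\mathrm{bulk}} f + R_{\mathrm{tail}} f,
\]
where $R_{\mathrm{bulk}}$ gathers the contribution from the vertical line and $R_{\mathrm{tail}}$ the residues at the eigenvalues $\lambda_\eta(\g)$ with $\eta>C\varepsilon\inv$.

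The bulk term is controlled representation-by-representation. Since $X$ is skew-adjoint, $\Re\langle P_\g v,v\rangle = \tfrac 12\g^2\langle\Delta_\bbS v,v\rangle$, and on the orthogonal complement of the $K$-invariants in each $\mc H_\pi$ one has $\Delta_\bbS\ge 1$, so there the numerical range of $P_\g$ sits in $\{\Re z\ge \g^2/2\}$. The threshold $\g\ge 4\sqrt{32}$, together with the error estimate in Theorem \ref{thm:evofPg}, guarantees that every $\lambda_\eta(\g)$ with $\eta\le C\varepsilon\inv$ lies strictly to the left of $\Re z = \g^2/4$, so the vertical line is an admissible bulk contour. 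On this line the numerical-range bound gives $\|(z-P_\g)\inv\|\lesssim \g^{-2}$ near the real axis and $\lesssim |\Im z|\inv$ far from it; a single integration by parts in $z$ absorbs one $1/t$ and yields the quantitative estimate $\|R_{\mathrm{bulk}}\|\le \frac{8}{\g^2 t}e^{-\g^2 t/4}$.

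For the tail term, write $f=\sum_\pi f_\pi$. The Sobolev norm on $S\M$ defined in Section \ref{sec:hyperbolicsurfaces} dominates $\sum_\pi (1+\eta_\pi)^2\|f_\pi\|_{L^2}^2$, hence
\[
\sum_{\eta_\pi>C\varepsilon\inv}\|f_\pi\|_{L^2}^2 \le \frac{\varepsilon^2}{C^2}\|f\|_{H^2}^2 \le \varepsilon^2;
\]
combined with $\|\Pi_{\lambda_\eta(\g)}\|\le 2$ and $|e^{-t\lambda_\eta(\g)}|\le 1$ this yields $\|R_{\mathrm{tail}}f\|\le\varepsilon$ up to an absorbable constant. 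The main obstacle will be making the bulk resolvent estimate uniform across all representations $\mc H_\pi$: the tridiagonal matrix from Theorem \ref{thm:evofPg} has $\eta_\pi$-dependent entries, so one has to verify that as $\eta_\pi\to\infty$ the remaining bulk eigenvalues never drift left into the contour, which is precisely what the two lower bounds on $\g$ in the hypothesis are designed to enforce.
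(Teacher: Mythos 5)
Your overall strategy -- decompose $L^2(S\M)$ into irreducibles, realize $e^{-tP_\g}$ as a contour integral of the resolvent, collect residues at the low-lying eigenvalues, and kill the bulk via integration by parts -- is indeed the paper's strategy. But there is a genuine gap in how you handle the high-frequency tail, and it is not a cosmetic one.

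You define $R_{\mathrm{tail}}$ as the ``residues at the eigenvalues $\lambda_\eta(\g)$ with $\eta>C\varepsilon\inv$'' and then estimate it using $\|\Pi_{\lambda_\eta(\g)}\|\le 2$. The problem is that for fixed $\g$ (satisfying only the stated lower bound in terms of $C\varepsilon\inv$) there are infinitely many $\eta\in\sigma(\Delta_\M)$ with $\eta$ arbitrarily large, and for those the whole perturbative picture collapses: Theorem~\ref{thm:evofPg} only produces an analytic eigenvalue branch $\lambda_\eta(\g)$ when $\g>2\sqrt{4\eta+6}$, and the bound $\|\Pi_{\lambda_\eta(\g)}\|\le 2$ coming from Corollary~\ref{cor:resofT} requires $\g\ge 4\sqrt{4\eta+6}$. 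Once $\eta$ exceeds roughly $\g^2/64$, neither the eigenvalue nor the projector is defined (much less bounded), so $R_{\mathrm{tail}}$ as you describe it simply does not make sense. Correspondingly, pushing a single contour for the full operator $P_\g$ out to the vertical line $\Re z=\g^2/4$ is not justified: on the representations with large $\eta$ there is no proof that the spectrum of $P_\g|_{\mc H_\pi}$ splits across that line, and indeed you would also have to account for the other eigenvalues of $T_\pi(x)$ (near $k^2$, $|k|\ge 1$) which may drift into the half-plane $\Re z<\g^2/4$ once the perturbation is not small relative to the spectral gap.

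The paper sidesteps this entirely. It never extracts an eigenvalue for $\eta>C\varepsilon\inv$. Instead it bounds the projection $\sum_{\eta>C\varepsilon\inv}f_\eta$ in $L^2$ by $\varepsilon$ using the $H^2$ hypothesis (exactly the Sobolev inequality you wrote down), and then applies nothing more than contractivity $\|e^{-tP_\g}\|\le 1$ to that piece -- no residue, no spectral projector, no contour. The Dunford--Cauchy/Laplace inversion with one residue and one integration by parts is done only representation-by-representation for the finitely many $\eta\le C\varepsilon\inv$ (and for the discrete series, with the bound from Corollary~\ref{cor:resofT}(ii)), where the perturbative smallness $|x|\le r_\pi/2$ is guaranteed by the hypothesis on $\g$. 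Two smaller points: the numerical-range observation on $V_0^\perp$ does not directly give a resolvent estimate for $P_\g|_{\mc H_\pi}$ because $H$ does not preserve $V_0^\perp$ (it maps $V_{\pm 1}$ back into $V_0$), so you would need Lemma~\ref{la:Tbdd} and Corollary~\ref{cor:resofT} anyway; and the ``absorbable constant'' in your tail estimate is not absorbed in the theorem's statement, which has exactly $\varepsilon$ -- the paper gets exactly $\varepsilon$ precisely because it uses contractivity rather than the factor-$2$ projector bound on the tail.
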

Note that this does not provide an asymptotic expansion for $t\to\infty$ due to arbitrarily small but constant error term $\varepsilon$. However, in contrast to asymptotic expansion in general, all coefficients, including the remainder term are explicitly controllable. As a corollary we get 
an estimate on $\|e^{-t  P_\g} f- \int_{S\M} f  \|$.
\begin{korollar}\label{cor:equilibrium}
  Let $(\M, g)$ be a orientable   compact surface of constant negative curvature scaled to $\kappa = -1$. There is a constant $C_0$ such that for all $C>0,\varepsilon > 0,B\geq 1$ and $f\in H^2(S\M)$ with $\|f\|_{H^2(S\M)}\leq C$ and $\g >\max \{4B(4C{\varepsilon} \inv +6)^{3/2} , 4\sqrt{32}\}$ it holds
  
 \begin{align*}
\norm{e^{-tP_\g}f - \int_{S\M} fd\mu}_{L^2(S\M)} \leq \varepsilon  &+ C_0 C \varepsilon\inv e^{-t(\eta_1-B\inv)}\|f\|_{L^2(S\M)}\\
&+\frac 8 {\g^2t}e^{-\g^2t/4} \|f\|_{L^2(S\M)}
\end{align*} where $\eta_1\coloneqq \min \sigma(\Delta_\M)\setminus\{0\}$.
\end{korollar}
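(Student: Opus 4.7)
The plan is to combine Theorem \ref{thm:convergencetoequilibrium} with the spectral estimate of Theorem \ref{thm:evofPg}: I isolate the contribution of the zero eigenvalue of $P_\g$ (which produces exactly the equilibrium average $\int_{S\M}f\,d\mu$), and then control the finitely many remaining terms via the explicit bound \eqref{eq:thm_error_estimate} together with Weyl's law on $\M$.

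To identify the $\eta = 0$ contribution I first verify that $\lambda_0(\g) = 0$ and that the associated spectral projector acts as $\Pi_{\lambda_0(\g)} f = \int_{S\M} f\,d\mu$. Since $X\mathbf{1} = 0 = \Delta_\bbS \mathbf{1}$, the constant function lies in the kernel of $P_\g$; the same calculation applied to $P_\g^*$, whose spatial part differs from $P_\g$ only by the sign of $X$, shows that the constants are simultaneously left and right eigenvectors for the eigenvalue $0$, so the corresponding Riesz projector is just the $L^2$-orthogonal projection onto constants, which sends $f$ to $\int_{S\M} f\,d\mu$ once $\mu$ is normalised to a probability measure. Extracting this single term from the spectral expansion of Theorem \ref{thm:convergencetoequilibrium} then yields
\begin{equation*}
\Bigl\| e^{-tP_\g}f - \int_{S\M} f\,d\mu \Bigr\|_{L^2} \leq \varepsilon + \frac{8}{\g^2 t} e^{-\g^2 t/4}\|f\|_{L^2} + \sum_{\substack{\eta\in\sigma(\Delta_\M)\\0 < \eta \leq C\varepsilon^{-1}}} \bigl\| e^{-t\lambda_\eta(\g)}\Pi_{\lambda_\eta(\g)} f \bigr\|_{L^2}.
\end{equation*}

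To bound the remaining sum I would rearrange \eqref{eq:thm_error_estimate} to obtain an explicit $\g$-threshold ensuring $|\lambda_\eta(\g) - \eta| \leq B^{-1}$ for every $0 < \eta \leq C\varepsilon^{-1}$: the condition is equivalent to $\g \geq (4\eta+6)^{1/2}(B(8\eta+12)+2)$, which for $B \geq 1$ and $\eta \geq 0$ is implied by $\g \geq 4B(4\eta+6)^{3/2}$. The hypothesis $\g > 4B(4C\varepsilon^{-1}+6)^{3/2}$ therefore gives $\Re\lambda_\eta(\g) \geq \eta - B^{-1} \geq \eta_1 - B^{-1}$ uniformly across the sum, hence $|e^{-t\lambda_\eta(\g)}| \leq e^{-t(\eta_1 - B^{-1})}$. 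Combining this with the operator-norm bound $\|\Pi_{\lambda_\eta(\g)}\| \leq 2$ supplied by Theorem \ref{thm:convergencetoequilibrium} and with Weyl's law on the compact surface $\M$, which gives $\#\{\eta \in \sigma(\Delta_\M) : \eta \leq \Lambda\} \leq c_\M(1+\Lambda)$, produces a total contribution of order $C_0 C\varepsilon^{-1} e^{-t(\eta_1-B^{-1})}\|f\|_{L^2}$ with $C_0$ depending only on $\M$.

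There is no serious obstacle; the argument is essentially bookkeeping. The only points that require a little care are the normalisation of $\mu$ in the identification of the zero-eigenvalue projector and the verification that the single threshold $\g > 4B(4C\varepsilon^{-1}+6)^{3/2}$ simultaneously implies the hypothesis $\g > 4\sqrt{4C\varepsilon^{-1}+6}$ of Theorem \ref{thm:convergencetoequilibrium} (immediate from $B \geq 1$) and validates the uniform application of \eqref{eq:thm_error_estimate} for all $\eta \leq C\varepsilon^{-1}$ at once.
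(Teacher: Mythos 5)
Your proposal is correct and follows essentially the same route as the paper's proof: peel off the $\eta=0$ term of Theorem \ref{thm:convergencetoequilibrium}, use the error estimate \eqref{eq:thm_error_estimate} from Theorem \ref{thm:evofPg} to obtain $\Re\lambda_\eta(\g)\geq\eta_1-B^{-1}$ uniformly for $0<\eta\leq C\varepsilon^{-1}$ once $\g>4B(4C\varepsilon^{-1}+6)^{3/2}$, and then combine $\|\Pi_{\lambda_\eta(\g)}\|\leq 2$ with the Weyl law to get the $C_0C\varepsilon^{-1}$ factor. One small remark: the detour through left/right eigenvectors to identify the zero-eigenvalue Riesz projector as the orthogonal projection onto constants is unnecessary (and, as written, silently assumes algebraic multiplicity one), because the proof of Theorem \ref{thm:convergencetoequilibrium} already \emph{defines} $\lambda_0(\g)=0$ and $\Pi_{\lambda_0(\g)}f=f_0$ to be the orthogonal projection onto the trivial representation, so $\Pi_{\lambda_0(\g)}f=\int_{S\M}f\,d\mu$ is immediate from the representation-theoretic decomposition.
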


Note that a problem related to the kinetic Brownian motion in $S\M$ is the study of the hypoelliptic Laplacian on $T\M$ introduced by Bismut \cite{Bis05}. Like the kinetic Brownian motion the hypoelliptic Laplacian interpolates between the geodesic flow and the Brownian motion. In \cite[Chapter 17]{BL08} Bismut and Lebeau prove the convergence of the spectrum of the hypoelliptic Laplacian to the spectrum of the Laplacian on $\M$ using semiclassical analysis. It seems plausible that their techniques can also be transferred to the setting of kinetic Brownian motion and might give the spectral convergence without any curvature restriction. The purpose of this article is however not to attack this general setting but show that under the assumption of constant negative curvature, harmonic analysis allows to drastically reduce the analytical difficulties. In fact we are able to reduce the problem to standard perturbation theory. This is also the reason why we are able to obtain the explicit error estimates \eqref{eq:thm_error_estimate}. The approach of applying harmonic analysis to spectral problems related to geodesics flows on manifolds of constant negative curvature (or more generally locally symmetric spaces) has been also pursude in \cite{FF03, DFG15, GHW18, GHW18a, KW17} for the analysis of Pollicott-Ruelle resonances and these results have been a major motivation for the present article.

Let us give a short outline of the proof of Theorem~\ref{thm:evofPg}: By the assumption of constant negative curvature, the manifold $\M$ is up to scaling of the Riemannian metric isometrically isomorphic to a hyperbolic surface  $\G\backslash \H$ where $\G\leq PSL_2(\R)$ is a cocompact torsion-free discrete subgroup and $\H$ the upper half plane.  $\H$ itself can be written as homogeneous space  $PSL_2(\R)/PSO(2)$. 
Under these identifications also the sphere bundle can be written as a homogeneous space $S\M =\G\backslash PSL_2(\R)$ which is obviously a homogeneous space for $PSL_2(\R)$.
Since the manifold is compact we can decompose the corresponding $L^2(S\M)$ into unitary irreducible $PSL_2(\R)$-representations $\mc H_\pi$ and the generator $P_\g$ can be expressed by the right $\mf {sl}_2(\R)$-action.
As a consequence $P_\g$ preserves the decomposition $L^2(S\M)=\oplus \mc H_\pi$ and we can study the restriction $P_\g\colon \mc H_\pi\to\mc H_\pi$ for each occurring representation separately. In each of these irreducible representations the spectral asymptotics of $P_\gamma$ can then be handled by standard perturbation theory of an operator-family of type (A)  in the sense of Kato. 

We want to point out that Theorem~\ref{thm:evofPg} can be obtained without using the representation theory of $SL_2(\R)$. Even more, in an updated article \cite{kww20} we prove Theorem~\ref{thm:evofPg} in the case of constant curvature surfaces, i.e. we extended our result to flat and positively curved surfaces. Here we do not use the representation theory of the corresponding isometry groups of the universal cover. Instead the results are proven using eigenspace decompositions of certain commuting differential operators. 

The article is organized as follows:
We will give a short overview over the kinetic Brownian motion and the connection between constant curvature surfaces and the representation theory of $PSL_2(\R)$  in Section \ref{sec:preliminaries}.
After that we will recall a few results of perturbation theory for unbounded linear operators (Section \ref{sec:pertth}) which are mostly taken from \cite{kato}. 
In the limit $\g\to\infty$ one would like to consider the geodesic vector field as a perturbation of the spherical Laplacian. 
The major difficulty is that $\frac{1}{\g}X$ is not a small perturbation in comparison with $\Delta_\bbS$. 
After the symmetry reduction there is a precise way to consider $X$ as small operator in any irreducible component.
Afterwards we will give a proof of the convergence of the spectra (Theorem \ref{thm:evofPg}). 
In the last part (Section \ref{sec:equilibrium}) we will prove Theorem \ref{thm:convergencetoequilibrium}.

\section{Preliminaries}\label{sec:preliminaries}
\subsection{Kinetic Brownian Motion}\label{sec:kbb}
Let $\M$ be a compact Riemannian manifold of dimension $d\geq 2$ with sphere bundle $S\M=\{(x,v)\in T\M\mid \|v\| =1\}$. We introduce the spherical Laplacian $\Delta_\bbS$ as follows: for every $x \in \M$ the tangent space $T_x\M$ is a Euclidean vector space via the Riemannian metric and $S_x\M=\{v\in T_x\M\mid \|v\|=1\}$ is a submanifold of $T_x\M$. The inner product on $T_x\M$ induces a Riemannian structure on $S_x\M$. Hence, the (positive) Laplace-Beltrami operator $\Delta_\bbS (x)\coloneqq \Delta_{S_x\M}$  of $S_x\M$ defines an operator $C^\infty(S_x\M)\to C^\infty (S_x\M)$. We now obtain the spherical Laplace operator $\Delta_\bbS$ by  
\[\Delta_\bbS: C^\infty(S\M)\to C^\infty(S\M),\quad \Delta_\bbS f (x,v):= (\Delta_\bbS(x)f(x,\cdot))(v).\]

For $(x,v)\in S\M$ and $w \in T _{(x,v)} S\M$ we define $\theta_{(x,v)} (w) =g_x (v, T_{(x,v)} \pi \,w)$ where $\pi \colon S\M\to \M$ is the projection and $g$ is the Riemannian metric on $\M$. Then $\theta$ is a 1-form on $S\M$ and $\nu =\theta \wedge (d\theta)^{d-1}$ defines the Liouville measure on $S\M$ which is invariant under the geodesic flow $\phi_t$. The vector field $X = \d \phi_t^\ast$ is called the geodesic vector field.

Let us consider the operator $P_\g = -\g X + \frac {\g^2}2 \Delta_\bbS$ with domain $\dom(P_\g)=\{u\in L^2(S\M) \mid P_\g u\in L^2(S\M)\}$ for $\g > 0$. Note that the action of $P_\g $ has to be interpreted in the sense of distributions. We first want to collect some  properties of $P_\g$.

\begin{proposition}
\label{prop:kbb}
 $P_\g$ is a hypoelliptic operator with 
 $$\norm{f}_{H^{2/3}}\leq C(\norm f_{L^2}+\norm {P_\gamma f}_{L^2})\quad \text{for}\quad f\in \dom (P_\gamma).$$
  $P_\gamma$ is accretive (i.e. $\Re\langle P_\g f ,f\rangle \geq 0$) and coincides with the closure of $P_\g|_{C^\infty}$.
 Therefore, $P_\g$ has compact resolvent, discrete spectrum with eigenspaces of finite dimension, and the spectrum is contained in the right half plane. $P_\g$ generates a positive strongly continuous contraction semigroup $e^{-tP_\g}$.
\end{proposition}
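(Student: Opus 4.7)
The plan is to establish the two analytic inputs—hypoellipticity with the subelliptic Sobolev estimate, and accretivity together with the identification $P_\g=\ov{P_\g|_{C^\infty}}$—and then to deduce every remaining assertion from standard operator-theoretic packages.

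For hypoellipticity I would locally write $\Delta_\bbS = -\sum_{i=1}^{d-1}V_i^2 + (\text{first order})$ with vertical fields $V_1,\dots,V_{d-1}$ spanning $\ker T\pi$ at every point. Using the horizontal--vertical splitting of $T(S\M)$ induced by the Levi-Civita connection, each commutator $[X,V_i]$ has nonzero horizontal component, so the family $\{V_i,[X,V_i]\}_{i=1}^{d-1}$ together with $X$ itself spans $T(S\M)$ pointwise; this is H\"ormander's bracket condition at first-order commutators. Counting $X$ with parabolic weight $2$ and the $V_i$ with weight $1$, as is natural for $P_\g$, the resulting anisotropic step is $3$, and the Rothschild--Stein machinery (or H\"ormander's direct argument for operators of this kinetic type) yields the optimal gain $\|f\|_{H^{2/3}}\leq C(\|f\|_{L^2}+\|P_\g f\|_{L^2})$. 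Accretivity is immediate: the geodesic flow preserves the Liouville measure, hence $X$ is formally skew-adjoint, and $\Delta_\bbS$ is non-negative self-adjoint, so $\Re\langle P_\g f,f\rangle = \tfrac{\g^2}{2}\langle\Delta_\bbS f,f\rangle\geq 0$.

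For $P_\g = \ov{P_\g|_{C^\infty}}$, I would mollify: any $u\in\dom(P_\g)$ lies in $H^{2/3}(S\M)$ by hypoellipticity, and local convolution against a Friedrichs mollifier (glued with a partition of unity on $S\M$) produces $u_n\in C^\infty$ with $u_n\to u$ and $P_\g u_n\to P_\g u$ in $L^2$, the second convergence following from the classical Friedrichs commutator lemma applied to the first-order part $X$ combined with the $H^{2/3}$ bound. With both ingredients in hand, the remaining consequences are automatic. Lumer--Phillips applied to $-P_\g$ produces the strongly continuous contraction semigroup as soon as $P_\g+\lambda$ has dense range for some $\lambda>0$; this is verified by repeating the accretivity-plus-hypoellipticity argument on the formal adjoint $P_\g^* = \g X + \tfrac{\g^2}{2}\Delta_\bbS$: a nonzero element $g$ orthogonal to $\im(P_\g+\lambda)$ satisfies $(P_\g^*+\lambda)g=0$ distributionally, is smooth by hypoellipticity of $P_\g^*$, and then contradicts accretivity $\lambda\|g\|^2 = -\Re\langle P_\g^* g,g\rangle\leq 0$. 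The compact embedding $\dom(P_\g)\hookrightarrow H^{2/3}(S\M)\hookrightarrow L^2(S\M)$ (Rellich) makes the resolvent compact, so the spectrum is discrete with finite-dimensional generalized eigenspaces, and accretivity pins it inside $\{\Re z\geq 0\}$. Positivity of $e^{-tP_\g}$ I would obtain either by recognising it as the Markov transition semigroup of $Y_t^\g$, or abstractly by a Trotter splitting into $e^{-t\g X}$ (pullback by the geodesic flow, hence positivity-preserving) and $e^{-t\g^2\Delta_\bbS/2}$ (fiberwise heat semigroup on spheres, hence positive).

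The main obstacle is extracting the sharp exponent $2/3$ in the subelliptic estimate: verifying the bracket condition is geometrically transparent, but passing from mere hypoellipticity to the precise regularity gain requires the anisotropic Rothschild--Stein framework calibrated to the parabolic scaling of $P_\g$. Once this is granted, the accretivity, closure, and semigroup parts are a textbook Lumer--Phillips/Rellich assembly.
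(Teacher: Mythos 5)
Your overall architecture — subelliptic estimate, accretivity via skew-symmetry of $X$ and positivity of $\Delta_\bbS$, identification of $P_\g$ with the closure of $P_\g|_{C^\infty}$, Rellich for the compact resolvent, and a Lumer--Phillips/dense-range step — matches the paper's, which cites Drouot's hypoelliptic estimate and then essentially runs a maximal-accretivity argument. The positivity step is also handled by valid alternatives (Trotter splitting into the geodesic-flow pullback and the fiberwise heat semigroup, versus the paper's use of a Kato-inequality-type criterion from Arendt et al.).

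The genuine gap is in your mollification argument for $P_\g=\ov{P_\g|_{C^\infty}}$. You invoke the Friedrichs commutator lemma ``applied to the first-order part $X$'' together with the $H^{2/3}$ bound, but the obstacle is the \emph{second-order} part $\Delta_\bbS$. Writing $P_\g u_n = \rho_n * (P_\g u) + [P_\g,\rho_n*]u$, the term $[X,\rho_n*]u$ does vanish in $L^2$ by Friedrichs for any $u\in L^2$, but the term $[\Delta_\bbS,\rho_n*]u$ contains pieces of the form $[a_{ij},\rho_n*]\partial_i\partial_j u$, and the commutator $[a,\rho_n*]$ gains only one derivative; to land in $L^2$ one needs $u\in H^1$ (or at least $H^1$ regularity in the vertical directions where $\Delta_\bbS$ differentiates), not $H^{2/3}$. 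Establishing vertical $H^1$ regularity a priori for $u\in\dom(P_\g)$ is exactly what the closure statement is supposed to give you, so the argument as written is circular. The paper sidesteps this by a duality trick: once $\ov{P_\g|_{C^\infty}}$ is shown maximal accretive via the dense-range step, its adjoint semigroup generator is $(P_\g|_{C^\infty})^*$, whose domain is precisely $\{v\in L^2 : (\tfrac12\g^2\Delta_\bbS+\g X)v\in L^2\}$; running the same argument with $-\g$ then shows the maximal realization $P_\g$ is itself maximal accretive, and two maximal accretive operators with one extending the other must coincide. (An even shorter fix, closer to your Lumer--Phillips step: for $u\in\dom(P_\g)$, set $f=(P_\g+1)u$ and $v=(\ov{P_\g|_{C^\infty}}+1)\inv f$; then $(P_\g+1)(u-v)=0$, hypoelliptic bootstrap gives $u-v\in C^\infty$, and accretivity on smooth functions forces $u=v$.) Either route replaces the mollification and closes the gap.
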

 \begin{proof}
  See Appendix.
 \end{proof}

\subsection{\texorpdfstring{Representation Theory of $SL_2(\R)$}{Representation Theory of SL(2,R)}}\label{sec:sl2}

\begin{definition}
 The \emph{special linear group} $SL_2(\R)$ is defined by \[SL_2(\R)\coloneqq \left \{\begin{pmatrix}
                                                                                       a&b\\c&d
                                                                                      \end{pmatrix}\in \R^{2\times2} \colon ad-bc=1\right\}.\]
 and the \emph{projective special linear group} by $PSL_2(\R)\coloneqq SL_2(\R) / \{\pm I\}$.
  We abbreviate $PSL_2(\R)$ by $G$.

Both groups are Lie groups with Lie algebra \[\mf g\coloneqq \mf{sl}_2(\R)= \left \{\begin{pmatrix}
                                                                                       a&b\\c&d
                                                                                      \end{pmatrix} \in \R^{2\times2}\colon a+d=0\right\}.\]
\end{definition}

\begin{notation}
We introduce the following elements of $\mf g$ resp. $\mf g \otimes\C$.

\[ \Xi=\frac 12\begin{pmatrix}0&1\\-1&0\end{pmatrix},\quad H=\frac{1}{2}\begin{pmatrix}
      1&0\\0&-1
     \end{pmatrix}, \quad  
     B=\frac{1}{2}\begin{pmatrix}
        0&1\\1&0
       \end{pmatrix}\text{ and }  X_\pm = -H\mp i B.
       \]
       
       The following commutator relations hold:
       $$ [\Xi, H] = -B, \quad [\Xi, B] = H, \quad [H,B] = \Xi, $$ $$ [\Xi,X_\pm] =\pm i X_\pm, \quad [X_+,X_-] = -2i\Xi.$$
       The Casimir element  is given by $$\Omega =4\Xi^2-4H^2-4B^2=4\Xi^2-2(X_+X_-+X_-X_+)\in \mc U(\mf g).$$
       The maximal compact subgroup $K$ of $G$ is $PSO(2)\coloneqq \{\exp (\theta \Xi)\mid \theta \in\R\} / \{\pm I\}$.

\end{notation}

It follows by a simple calculation that $$[\Omega, \Xi] = [\Omega , H] = [\Omega, B] = 0, $$
hence $\Omega \in Z(\mc U(\mf g))$.

Let $(\pi, \mc H_\pi)$  be a irreducible unitary representation of $PSL_2(\R)$. 
Then $\pi(\Omega)$ acts as a scalar $\lambda_\pi$ on $\mc H_\pi$ by Schur's lemma.

Since $PSO(2)$ is compact, $\mc H_\pi$ decomposes as a $PSO(2)$-representation, i.e. we have a orthogonal direct sum \begin{equation}
 \mc H_\pi =\widehat\bigoplus_{k\in \Z} V_k \quad\text{with}\quad \pi(\exp (\theta \Xi)) = e^{ik\theta} \quad \text {on} \quad  V_k.\label{eq:K-types}\end{equation}

One can show that each $V_k$ consists of analytic vectors for $\pi$ and is at most one-dimensional. Let $\phi_k$ denote a normalized element in $V_k$ if $V_k\neq 0$. In particular, $\pi(\Xi) \phi_k = ik \phi_k$ on $V_k$.

The operators $X_\pm$ are raising resp. lowering operators that is $X_\pm \colon V_k\to V_{k\pm 1}$. Indeed, 
$$\Xi X_\pm v = X_\pm \Xi v +[\Xi, X_\pm]v = ik X_\pm v \pm i X_\pm v = i(k\pm 1) X_\pm v, \quad v\in V_k.$$
Moreover, 
$$-4X_\mp X_\pm = \Omega -4\Xi^2 \mp 4i \Xi = \lambda_\pi + 4k^2 \pm 4k = (2k \pm 1)^2 +\lambda_\pi -1.$$
Since $X_\pm^\ast = - X_\mp$, the norm of $X_\pm$ is given by \[\|X_\pm\|_{V_k\to V_{k\pm 1}} =\frac 12 \sqrt{(2k\pm1)^2+\lambda_\pi -1}.\]

The scalar $\lambda_\pi$ classifies  all unitary irreducible representations of $G$.

\begin{theorem}[see {\cite[Ch. 8 Thm. 2.2]{taylor}}]
 Each non-trivial irreducible  unitary representation of $PSL_2(\R)$ is unitarily equivalent to one of the following types:
\begin{itemize}
 \item (Anti-)Holomorphic discrete series: $\pi^\pm_{\pm 2n}$, $n\in\N$, with $\pi^\pm_{\pm 2n}(\Omega)=1-(2n-1)^2$ and $\frac 1i \sigma (\pi^\pm_{\pm 2n}(\Xi))=\pm (n+\N_0)$ 
 \item Principle series: $\pi_{is},$ $s\in\R$, with $\pi_{is}(\Omega)=1+s^2$ and $\frac 1i \sigma (\pi_{is}(\Xi))=\Z$ 
 
 \item Complementary series: $\pi_s$, $s\in (-1,1)\setminus\{0\}$ with $\pi_{s}(\Omega)=1-s^2$ and $\frac 1i \sigma (\pi_{s}(\Xi))=\Z$.
\end{itemize}
There are no unitary equivalences except for  $\pi_{is}\simeq \pi_{-is}$ and  $\pi_{s}\simeq  \pi_{-s}$.
\end{theorem}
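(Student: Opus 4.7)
The plan is the classical infinitesimal classification via the Harish-Chandra $(\mathfrak{g}, K)$-module of $K$-finite vectors. The decomposition \eqref{eq:K-types} already displays $\mc H_\pi$ as an orthogonal sum of one-dimensional $K$-isotypes $V_k$ indexed by some subset $S \subseteq \Z$. By Schur's lemma $\Omega$ acts as a scalar $\lambda_\pi$, and $\lambda_\pi \in \R$ because $\pi(\Xi), \pi(H), \pi(B)$ are skew-adjoint, so $\Omega = 4\Xi^2 - 4H^2 - 4B^2$ is self-adjoint.

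First I would combine the explicit formula $\|X_\pm \phi_k\|^2 = \tfrac{1}{4}((2k\pm 1)^2 + \lambda_\pi - 1)$ derived in the excerpt with irreducibility: since $X_\pm$ maps $V_k$ into $V_{k\pm 1}$, the set $S$ must be an interval in $\Z$ (possibly infinite in either direction) whose endpoints, if any, lie exactly at integers $k$ satisfying $(2k\pm 1)^2 = 1 - \lambda_\pi$. Positivity $\|X_\pm \phi_k\|^2 \geq 0$ for all $k \in S$ is the only remaining constraint.

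The case analysis then splits according to whether $1 - \lambda_\pi$ is the square of an odd integer. If $\lambda_\pi > 0$ one has $(2k\pm 1)^2 \geq 1 > 1 - \lambda_\pi$ for every $k \in \Z$, so $S = \Z$; setting $\lambda_\pi = 1 + s^2$ with $s \geq 0$ gives the principal series, while $\lambda_\pi = 1 - s^2$ with $s \in (0,1)$ gives the complementary series. If $\lambda_\pi = 1 - (2n-1)^2$ for some $n \in \N$, then $X_\pm$ has a zero at the predicted integers and the only choices of $S$ compatible with irreducibility and positivity are $S = n + \N_0$ (holomorphic discrete series $\pi^+_{2n}$) and $S = -n - \N_0$ (antiholomorphic); the bounded alternative $S = \{-n+1, \ldots, n-1\}$ fails unitarity because $0$ lies in this interval and $\|X_\pm \phi_0\|^2 = \lambda_\pi / 4$ is strictly negative for $n \geq 2$, while $n = 1$ degenerates to the trivial representation. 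All remaining values of $\lambda_\pi$ violate positivity.

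The main obstacle is verifying that every parameter on the list actually arises from an honest irreducible unitary group representation, since the algebraic analysis above only shows no others can occur. For this one exhibits explicit models: the principal and complementary series appear as (unitarised) parabolic inductions from characters of the upper-triangular subgroup, the unitarisation in the complementary range requiring a nontrivial intertwiner; the discrete series live on $L^2$-spaces of holomorphic sections over $G/K \simeq \H$. Integrability of the algebraic module to a group representation is then automatic by Harish-Chandra's theorem that $K$-finite vectors are analytic. Finally, non-equivalence follows because the pair consisting of $\lambda_\pi$ and the spectrum of $\tfrac{1}{i}\Xi$ is a complete invariant of the $(\mathfrak{g}, K)$-module, and the stated equivalences $\pi_{is} \simeq \pi_{-is}$, $\pi_s \simeq \pi_{-s}$ are immediate from $\lambda_\pi$ depending only on $s^2$.
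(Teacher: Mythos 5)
The paper does not supply its own proof of this classification but cites Taylor's \emph{Noncommutative Harmonic Analysis}, and Taylor's argument is precisely the infinitesimal $(\mathfrak{g},K)$-module classification that you sketch: decompose the $K$-finite vectors into one-dimensional $K$-isotypes, use the norm formula for $X_\pm$ together with positivity and irreducibility to pin down the allowed index set $S$ and Casimir scalar $\lambda_\pi$, and then exhibit the explicit models (parabolic induction for the principal and complementary series, holomorphic sections over $\H$ for the discrete series) to establish existence. Your proposal is correct and follows the same route as the cited source; the only minor imprecision is the appeal to Harish-Chandra's analytic-vectors theorem for integrability, which runs in the direction group representation $\to$ $(\mathfrak{g},K)$-module rather than the converse, but this is harmless since you independently supply existence through the explicit models.
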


In our setting we do not have to distinguish between principle and complementary series representations. Hence, we only distinguish between irreducible unitary representations $\pi$ with $\lambda_\pi <0$ and $\lambda_\pi > 0$ (and the trivial representation). In the former case we have $\mc H_{\pi^\pm_{\pm 2n}} = \bigoplus _ {\pm k\geq n} V_k$ and in the latter case we have $\mc H_\pi =\bigoplus_{k\in \Z} V_k$ with $\dim V_k =1$ for all $\pm k \geq n$ resp. $k\in \Z$.

\subsubsection{Sobolev Regularity for Unitary Representations}\label{sec:sobolev}

Let  $(\pi ,\mc H_\pi)$ be  a unitary representation of a real Lie group $G$ and  $X_1,\ldots , X_n$ be a basis of $\mf g$.
We define the Laplacian $\Delta$ (depending on the Basis) as $$\Delta = -\sum X_i^2.$$ The Laplacian acts as an essentially self-adjoint operator 
on $\mc H_\pi$. The Sobolev space $\mc H_\pi^2$ of order $2$ is the domain of the closure of $I+\Delta$, i.e.
$\mc H_\pi^2=\{u \in \mc H_\pi \mid (I+\Delta) u \in \mc H_\pi\}.$ Here $(I+\Delta) u$ is seen as an element of $(C^\infty (\mc H_\pi))^\ast$ where $C^\infty (\mc H_\pi)$ denotes the set of smooth vectors for $\pi$.  
$ \mc H_\pi^2$ is a Hilbert space with the inner product $\langle u_1, u_2\rangle_2 = \langle (I+\Delta) u_1,(I+\Delta) u_2\rangle$. 

Let $\mc U_k(\mf g_\C)$ be the subspace of $\mc U(\mf g_\C)$ spanned by $Y_{1}\cdots Y_{l}$ with $Y_{i}\in \mf g_\C$ and $l\leq k$.
By \cite[Lemma 6.1]{Nel} we have $$ \forall \,B  \in \mc U_2(\mf g_\C)\,\exists\, C>0 \colon \quad \norm{Bu}\leq C\norm{(I+\Delta)u} \quad \forall\, u\in C^\infty(\mc H_\pi).$$

In particular, $\mc H^2_\pi$ is independent of the choice of basis (in contrast to $\langle\cdot,\cdot\rangle_2$ which depends on the choice of the basis). We will need a slightly more general lemma which is an analogous to the ordinary elliptic regularity estimates in $\R^n$ (see e.g. \cite[Thm. 7.1]{zworski}).

\begin{lemma}
 Let $Q =\Delta + A$ with $A\in \mc U_1(\mf g_\C)$. Then $\mc H^2_\pi=\{u\in\mc H_\pi \mid Qu\in \mc H_\pi\}$ and there is $C>0$ s.t. $\norm{u}_2 \leq C(\norm{Qu}+\norm u)$. 
\end{lemma}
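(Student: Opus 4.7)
The plan is to reduce the lemma to an a priori estimate on smooth vectors $u \in C^\infty(\mc H_\pi)$ and then extend to the full domain by approximation. The central analytic input will be that a first-order operator $A \in \mc U_1(\mf g_\C)$ is subordinate to $I+\Delta$ in the interpolation sense, so that its contribution to the graph norm of $Q$ can be absorbed.

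First I would derive the interpolation bound. Using $X_i^* = -X_i$ in the unitary representation, one has
\[
\sum_i \|X_i u\|^2 = -\sum_i \langle X_i^2 u, u\rangle = \langle \Delta u, u\rangle \leq \langle (I+\Delta) u, u\rangle \leq \|u\|\,\|u\|_2.
\]
Thus $\|Yu\|^2 \leq C\|u\|\,\|u\|_2$ for every $Y \in \mf g$, and writing $A = c_0 + \sum c_i X_i$ in the fixed basis yields, by Young's inequality, $\|Au\| \leq \varepsilon\|u\|_2 + C_\varepsilon\|u\|$ for every $\varepsilon > 0$. Combining with the decomposition $(I+\Delta)u = (I + Q - A)u$ and the triangle inequality, then choosing $\varepsilon = 1/2$ to absorb the $\|u\|_2$-term into the left-hand side, delivers the a priori estimate $\|u\|_2 \leq C(\|Qu\| + \|u\|)$ for smooth $u$. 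The reverse bound $\|Qu\| \leq C\|u\|_2$ on smooth vectors is immediate from the Nelson estimate cited before the lemma, applied to $Q \in \mc U_2(\mf g_\C)$. Together these show that on $C^\infty(\mc H_\pi)$ the graph norms of $Q$ and of $I+\Delta$ are equivalent.

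To establish the set equality $\mc H^2_\pi = \{u \in \mc H_\pi \mid Qu \in \mc H_\pi\}$, the forward inclusion follows from density of smooth vectors in $\mc H^2_\pi$ together with the Nelson bound, since $Q$ is continuous on distributions. For the reverse inclusion I would apply a Friedrichs mollification: given $u \in \mc H_\pi$ with $Qu \in \mc H_\pi$, set $u_\varepsilon := \pi(\varphi_\varepsilon) u$ for an approximate identity $\varphi_\varepsilon \in C_c^\infty(G)$. Then $u_\varepsilon$ is smooth, $u_\varepsilon \to u$ in $\mc H_\pi$, and the difference between the left- and right-invariant realizations of $Q$ on $G$ represents $[Q, \pi(\varphi_\varepsilon)]$ as a strictly lower-order convolution operator that vanishes as $\varepsilon \to 0$; hence $Qu_\varepsilon \to Qu$ in $\mc H_\pi$. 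Applying the a priori estimate to $u_\varepsilon - u_{\varepsilon'}$ shows that $(u_\varepsilon)$ is Cauchy in $\mc H^2_\pi$ with limit $u$, placing $u$ in $\mc H^2_\pi$.

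The main obstacle is the commutator step in the mollification: the interpolation-plus-absorption giving the a priori bound is essentially a one-line calculation, but passing from ``$Qu$ distributionally in $\mc H_\pi$'' to ``$u$ approximable in graph norm by smooth vectors'' requires quantitative control of $[Q, \pi(\varphi_\varepsilon)]$ uniformly in $\varepsilon$. This is standard Friedrichs-lemma input in the Lie-group setting, but it is the only nontrivial technical point beyond the interpolation estimate.
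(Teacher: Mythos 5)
Your a priori estimate is correct and rests on the same key algebraic fact as the paper's proof, namely skew-adjointness of the $X_i$ giving $\sum_i\|X_i u\|^2 = \langle \Delta u, u\rangle$, but you package it differently. The paper expands $2\,\Re\langle Qu, u\rangle = 2\sum\|X_iu\|^2 + 2\Re\langle Au,u\rangle$ and handles the cross terms by a one-shot Cauchy--Schwarz to land directly on $\sum\|X_i u\|^2 \leq \|Qu\|^2 + C\|u\|^2$; you instead prove the interpolation bound $\|X_iu\|^2 \leq \|u\|\,\|u\|_2$, feed it through Young's inequality to get $\|Au\| \leq \varepsilon\|u\|_2 + C_\varepsilon\|u\|$, and absorb. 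Both are short and correct; the paper's route produces explicit constants without $\varepsilon$-absorption, while yours makes the structural fact that $A$ is $\Delta$-bounded with relative bound zero visible, which is useful if one wishes to invoke Kato--Rellich-type stability.

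You are right that the set equality $\mc H^2_\pi = \{u : Qu \in \mc H_\pi\}$ is a separate claim; the paper's written proof only establishes the inequality on smooth vectors and leaves the domain statement implicit. Your Friedrichs-mollification route is one legitimate way to close this, but the commutator step you flag is genuinely the crux, and the sketch as written is not yet a proof: $[Q,\pi(\varphi_\varepsilon)]u$ does not converge to zero merely because the left- and right-invariant derivatives agree at the identity (a derivative of a mollifier blows up like $\varepsilon^{-1}$), so one must argue that the commutator is of strictly lower order and trade that against the assumed regularity of $Qu$ to obtain a bound uniform in $\varepsilon$. An alternative that avoids Friedrichs altogether: by your absorption estimate $A$ is relatively $\Delta$-bounded with bound zero, so $Q$ with domain $\mc H^2_\pi$ is closed and $(Q+\lambda)|_{\mc H^2_\pi}$ is surjective for $\lambda$ large; applying the same reasoning to the formal adjoint $Q^\ast = \Delta + A^\ast \in \mc U_2(\mf g_\C)$ gives $(Q^\ast+\bar\lambda)|_{C^\infty}$ dense range, so the distributional kernel of $Q+\lambda$ in $\mc H_\pi$ is trivial and any $u$ with $Qu\in\mc H_\pi$ must coincide with the unique $\mc H^2_\pi$-preimage of $(Q+\lambda)u$.
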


\begin{proof}
Since $\norm{\Delta u } \leq \norm{Qu} + \norm{Au}$ for $u \in C^\infty(\mc H_\pi)$ it remains to show that $\norm{X_i u} \leq C(\norm{Qu}+\norm{u})$. Let therefore $A=\sum_{i=1}^n a_i X_i + b$ with $a_i,b\in \C$.
 Then we have by the Cauchy-Schwarz inequality
 \begin{align*}
  - |a_i\langle X_i u, u\rangle | &\geq - \norm{ X_i u} |a_i| \norm{u} = \frac 12 ((\norm{X_i u}-|a_i|\norm u)^2 -\norm{X_i u}^2-|a_i|^2 \norm{u}^2) \\& \geq -\frac 12 \norm{X_i u}^2 - \frac 12 |a_i|^2\norm{u}^2.
 \end{align*}
 Since $\norm{Qu - u}^2\geq 0$ we infer that
 \begin{align*}
  \norm{Qu}^2 + \norm u ^2 &\geq 2 \Re \langle Qu, u\rangle \\
  &=2\sum  \langle X_i u, X_i u\rangle + 2\sum \Re ( a_i \langle X_i u, u\rangle) + 2\Re b\langle u,u\rangle \\
  &\geq 2\sum \norm{X_i u}^2 - 2\sum |a_i \langle X_i u, u\rangle| - 2|b| \norm{u}^2 \\
  &\geq  \sum \norm{X_i u}^2 - \left (\sum |a_i|^2+2|b| \right ) \norm u ^2.
 \end{align*}
It follows that $\sum \norm{X_i u}^2 \leq \norm{Qu}^2 + \left( 1+ \sum |a_i|^2 + 2|b| \right ) \norm u ^2$. This completes the proof.
\end{proof}

So far we have considered arbitrary unitary representations. Now let $(\pi, \mc H_\pi)$ be an irreducible unitary representation of $G=PSL_2(\R)$. Consider the basis $\Xi, H,B$ of $\mf g$. Then we have $\Delta = -\Xi^2-H^2-B^2 = -2\Xi^2 + \Omega/4$. Note that  $\Omega$ acts as a scalar since $\pi$ is irreducible. Hence, $H^2(\mc H_\pi) = \{u\in \mc H_\pi\mid \Xi^2u\in \mc H_\pi\} =  \{u\in \mc H_\pi\mid (-\Xi^2 + A)u \in \mc H_\pi\}$ for every $A\in \mc U_1(\mf g_\C)$.

\subsection{Hyperbolic Surfaces}\label{sec:hyperbolicsurfaces}

Let $\M$ be a orientable compact  Riemannian manifold of dimension 2 and constant negative curvature $-1$. 
Since $\M$ has finitely many connected components, let us  assume without loss of generality that $\M$ is connected.
By the uniformization theorem %\cite[Cor. 11.13]{lee}
$\M$ is isometrically isomorphic to $\G\backslash \H$ where $\H = \{x+iy\mid y>0\}$ is the upper half plane with the metric $y^{-2} dxdy$ and $\G \subseteq \isom^+(\H)$ is a discrete subgroup of orientation preserving isometries on $\H$ acting freely and properly discontinuously on $\H$. 
 Note that $G=PSL_2(\R)$ acts on $\H$ by the M\"obius transformation. Even more,  $G$ is the  group of orientation preserving isometries  $\isom^+(\H)$ which acts transitively on $\H$. With this action $G/K\simeq\H$ and $ G \simeq S\H $ via $g.(z,v)=(g.z, T_zg\,v)$. 
 
 We infer that $\M$ is a locally symmetric space $\G\backslash G/K$ with sphere bundle $S\M = \G\backslash G$. 
 We have a unitary representation of $G$ on $L^2(\G\backslash G, m)$ (with the Haar measure $m$ on $\G\backslash G$) given by
 \[g.f(x)\coloneqq f(xg),\quad\quad f\in L^2(\G\backslash G),\, x\in  \G\backslash G,\,g\in G\]
 which we call \emph{regular representation}.
 We obtain a Lie algebra representation of $\mf g$ on $C^\infty(\G\backslash G)$ by derivation:
 \[Af(x) = \d f(x\exp (tA)),\qquad f\in C^\infty(\G\backslash G),\, x\in \G\backslash G, \, A\in\mf g.\]
  The geodesic vector field  and the (spherical) Laplacian  can be expressed by elements of $\mc U(\mf g)$.
  \begin{proposition}\label{prop:geometricoperators}
 The geodesic vector field $X$, the spherical Laplace operator $\Delta_\bbS$ on $S\M$ and the Laplace operator on $\M$ are given by \[X = H, \qquad \Delta_\bbS = -\Xi^2\qquad \text{and}\qquad \Delta_\M=-H^2-B^2.\]
 Note that $-H^2-B^2$ is a $K$-invariant element in $\mc U(\mf g)$ so that it defines a right $K$-invariant differential operator on $C^\infty(\G\backslash G)$ that descends to a differential operator on  $C^\infty(\G \backslash G/K)=C^\infty(\M)$.
\end{proposition}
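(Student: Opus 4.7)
The plan is to verify each of the three identities by recognising the right-hand side as the infinitesimal generator of an explicit one-parameter subgroup acting by right multiplication on $G$, and then matching this with the geometric definition of the operator via the identifications $\H \simeq G/K$ and $S\H \simeq G$. For $X = H$, I would compute the M\"obius action directly: $\exp(tH)\cdot i = e^{t} i$, which is a unit-speed geodesic in $\H$ with initial tangent vector $\partial_y|_i$. Under the identification $S\H \simeq G$, this base point corresponds to $e \in G$. Differentiating the action on tangent vectors gives $T_i\exp(tH)\cdot\partial_y = e^{t}\partial_y|_{e^t i}$, which is the unit upward vector at $e^t i$ along the geodesic. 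Hence right multiplication by $\exp(tH)$ on $G$ corresponds to the geodesic flow at the distinguished base point; by commutation of left and right translation, this extends $G$-equivariantly to every point. The generator $X$ of the flow $\phi_t$ therefore equals $H$.

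For $\Delta_\bbS = -\Xi^2$, I would verify that $\exp(\theta\Xi)$ stabilises $i$ and that its derivative on $T_i\H$ is multiplication by $e^{i\theta}$ (a one-line calculation using $f'(i) = (ci+d)^{-2}$ for a M\"obius $f$). Thus right multiplication by $\exp(\theta\Xi)$ rotates each fibre of $S\M\to\M$ by angle $\theta$; since the fibre metric coincides with the Euclidean metric at $i$ (where $y=1$), this action is unit-speed and $2\pi$-periodic. Therefore $\Xi$ is a unit vector field tangent to the fibres, each fibre is a circle of length $2\pi$ parametrised by the rotation angle, and its Laplace-Beltrami operator is $-\partial_\theta^2 = -\Xi^2$.

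For $\Delta_\M = -H^2 - B^2$, it suffices to prove the corresponding identity on $\H$ since $\G$ acts by isometries. The element $H^2 + B^2 = \Xi^2 - \Omega/4 \in \mc U(\mf g)$ is $\Ad(K)$-invariant, so as a left-invariant operator on $G$ it preserves right-$K$-invariant functions and descends to a $G$-invariant second-order operator on $G/K = \H$. Since the algebra of $G$-invariant differential operators on the rank-one symmetric space $\H$ is generated by $\Delta_\H$, it is enough to match values on a single test function at one point. Choosing $\tilde f(x,y) = y^s$, a direct computation using $\exp(tH)\cdot i = e^t i$ and $\exp(tB)\cdot i = \tanh(t) + i\operatorname{sech}(t)$ gives $(-H^2-B^2)\tilde f(i) = s - s^2$, which agrees with $\Delta_\H \tilde f(i) = -y^2(\partial_x^2+\partial_y^2)y^s|_{y=1} = s(1-s)$, forcing the two operators to coincide everywhere.

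The main technical obstacle is the third identity: one must set up the descent of $H^2 + B^2$ from a left-invariant operator on $G$ to a $G$-invariant operator on $G/K$, and then pin down the scaling by a pointwise test computation. The first two identities follow directly by inspecting the M\"obius action at $i$.
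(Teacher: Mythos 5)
Your proof is correct. The first two identities are established exactly as in the paper: identify the one-parameter subgroups $\exp(tH)$ and $\exp(\theta\Xi)$ with the geodesic flow (resp.\ fiberwise rotation) at the base point $(i,i)\in S\H$, then extend by $G$-equivariance of right translation. The $\Xi$-computation (stabilizer of $i$, derivative $e^{i\theta}$) is the same content as the paper's, just organized slightly differently.

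For the third identity you take a genuinely different route. The paper proceeds by a direct coordinate computation at the identity coset: it shows that, on right-$K$-invariant functions evaluated at $e_G$, $H$ matches $\partial_y$ and $B$ (split as $E+F$) matches $\partial_x$, concludes $(-H^2-B^2)f(i)=-(\partial_x^2+\partial_y^2)f(i)=\Delta_\H f(i)$, and invokes left-$G$-invariance to identify the operators globally. You instead use the $\Ad(K)$-invariance of $H^2+B^2=\Xi^2-\Omega/4$ to descend it to a $G$-invariant second-order operator on $\H$, then appeal to the structure theory for invariant differential operators on the rank-one symmetric space $\H$ (the algebra is polynomial in $\Delta_\H$) to know a priori that $-H^2-B^2=a\Delta_\H+b$, and finally evaluate on the family $y^s$ at $i$ to read off $a=1$, $b=0$. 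One remark on the wording: ``match values on a single test function at one point'' is not by itself enough to pin down both $a$ and $b$; your argument works precisely because $y^s$ is a one-parameter family and matching $s-s^2$ against $as(1-s)+b$ identically in $s$ gives the two needed constraints. The paper's approach is more elementary (pure calculus on $\H$), while yours is conceptually cleaner but imports the nontrivial classification of invariant operators on a symmetric space.
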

\begin{proof} See Appendix.
\end{proof}

As a consequence, the Haar measure $m$ converts to a $\varphi_t$-invariant smooth measure on $S\M$ under the identification $S\M\simeq \G\backslash G $. Recall that the Liouville measure $\mu$ provides as well a $\varphi_t$-invariant smooth measure. As geodesic flows on compact negatively curved manifolds are known to have a unique smooth invariant measure (up to scaling) the Haar measure can be suitably scaled such that it coincides with the Liouville measure.

 Not only the geometric operators can be expressed by the $\mc U(\mf g)$-action but also the Sobolev spaces $H^\alpha(S\M)$ can be described   in terms of the $\mc U(\mf g)$-action.  More precisely, $-H^2-B^2-\Xi^2$ is an elliptic operator on $L^2(S\M)$ so that we have $$H^\alpha(S\M)= \{u \in L^2(S\M)\mid (I-H^2-B^2-\Xi^2)^{\alpha/2} u \in L^2(S\M)\}.$$
 In particular, $\langle u_1, u_2\rangle_\alpha = \langle (I-H^2-B^2-\Xi^2)^{\alpha} u_1, u_2\rangle$ is a possible choice for an inner product on $H^\alpha(S\M)$ that we will use for our results.
\subsection{Direct Decompositions}\label{sec:decomp}

 Our main tool to investigate the spectrum of the kinetic Brownian motion on $L^2(S\M)$ will be the following theorem.

\begin{theorem}[{see \cite[Ch. 8.6]{taylor}}]\label{thm:decomp}
 The regular representation on $L^2(S\M)$ decomposes discretely into unitary irreducible representation of $G$. For a principle or complementary series representation $\pi$ the multiplicity in $L^2(S\M)$ is given by the multiplicity of the eigenvalue $\frac 14\lambda_\pi$ of the Laplace operator $\Delta_\M$ on $\M$. Moreover, the multiplicity of $\pi^\pm_{\pm n}$ is $(n-1)(g-1)$ for even $n\geq 4$ and $g$ for $n=2$ where $g$ is the genus of $\M$. The trivial representation occurs once in $L^2(S\M)$.
 Hence, 
 \begin{align*}
  L^2(S\M) = \bigoplus_{s\in (0,1)} m(\pi_s) \mc H_{\pi_{s}} \oplus \bigoplus_{s \geq 0}  m (\pi_{is}) \mc H_{\pi_{is}}\oplus \bigoplus _ {n\in\N} m(\pi_{\pm 2n}^\pm) \mc H_{\pi_{\pm 2n}^\pm} \oplus \C
  \end{align*}
   where $m(\pi_s) = \dim\ker (\Delta_\M - \frac 14(1-s^2))$, $m(\pi_{is}) = \dim\ker (\Delta_\M - \frac 14(1+s^2))$, $m(\pi_{\pm 2}^\pm)=g$ and $m(\pi_{\pm 2n}^\pm)=(2n-1)(g-1)$.  
The Sobolev space decomposes into
\begin{align*}
  H^2(S\M) = \bigoplus_{s\in (0,1)} m(\pi_s) \mc H^2_{\pi_{s}} \oplus \bigoplus_{s \geq 0}  m (\pi_{is}) \mc H^2_{\pi_{is}}\oplus \bigoplus _ {n\in\N} m(\pi_{\pm 2n}^\pm) \mc H^2_{\pi_{\pm 2n}^\pm} \oplus \C.
  \end{align*}
\end{theorem}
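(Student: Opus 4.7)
The plan is to combine the general Gelfand–Graev–Piatetski-Shapiro-type decomposition for cocompact lattices with an explicit identification of each multiplicity in terms of geometric data on $\M$. Since $\G$ is cocompact in $G$, right convolution $R(\varphi)$ with any $\varphi\in C_c^\infty(G)$ on $L^2(\G\backslash G)$ has a bounded kernel on the compact space $\G\backslash G$ and is therefore Hilbert–Schmidt, in particular compact. Standard spectral theory for $*$-algebras of compact operators, applied to a family of such $\varphi$'s that separates representations, then shows that $L^2(\G\backslash G)$ decomposes as a countable Hilbert direct sum of irreducible unitary representations of $G$, each occurring with finite multiplicity $m(\pi)$.

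To identify $m(\pi)$ for a principal or complementary series representation $\pi$, I would use that, by the classification theorem, these are exactly the non-trivial irreducibles with a non-zero $K$-fixed vector, namely the line $V_0$. The $K$-fixed subspace $L^2(\G\backslash G)^K$ is canonically identified with $L^2(\G\backslash G/K)=L^2(\M)$. The Casimir $\Omega$ acts as the scalar $\lambda_\pi$ on $\mc H_\pi$ and preserves isotypic components, and on $V_0$ it satisfies $\Omega=4\Xi^2-4H^2-4B^2=-4(H^2+B^2)=4\Delta_\M$ by Proposition~\ref{prop:geometricoperators}, since $\Xi$ vanishes on $V_0$. Hence the $\pi$-isotype of $L^2(\G\backslash G)^K$ coincides with the $\tfrac14\lambda_\pi$-eigenspace of $\Delta_\M$, whose dimension equals $m(\pi)$; plugging in $\lambda_\pi=1+s^2$ and $\lambda_\pi=1-s^2$ gives the claimed formulas for $m(\pi_{is})$ and $m(\pi_s)$.

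For the (anti-)holomorphic discrete series $\pi_{\pm 2n}^\pm$ there is no $K$-fixed vector; instead the lowest $K$-type is $V_{\pm n}$, characterized inside the representation as the kernel of the lowering operator $X_\mp$. The main step is therefore to identify, inside $L^2(\G\backslash G)$, the $X_\mp$-kernel of the $V_{\pm n}$-isotype with the space of global holomorphic (resp.\ anti-holomorphic) sections of the $n$-th power of the canonical bundle $K_\M$ on the Riemann surface $\M=\G\backslash\H$. Via $\H\simeq G/K$, the operators $X_\pm$ are, up to a scalar, the classical Maass raising and lowering operators, and their weight-$2n$ kernels are exactly the (anti-)holomorphic modular forms (cf.\ \cite{taylor}). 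Riemann–Roch applied to $K_\M^{\otimes n}$, combined with Serre duality, which kills the $H^1$-term as soon as $n\geq 2$ since $\deg K_\M=2g-2$, then yields $\dim H^0(\M,K_\M^{\otimes n})=(2n-1)(g-1)$ for $n\geq 2$ and $\dim H^0(\M,K_\M)=g$ for $n=1$, which is the asserted formula.

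The main obstacle I expect is this Riemann–Roch step, since it requires writing $X_\pm$ explicitly in the upper half-plane model and carefully matching automorphy factors and weight conventions to reconcile the representation-theoretic and complex-analytic points of view. Once the $L^2$-decomposition is in hand, the Sobolev assertion is essentially automatic: the elliptic operator $I-H^2-B^2-\Xi^2=I+\tfrac14\Omega-2\Xi^2$, whose domain equals $H^2(S\M)$ by Section~\ref{sec:sobolev}, is built from elements of $\mc U(\mf g)$ and therefore preserves the Hilbert direct sum $\bigoplus_\pi \mc H_\pi$ on smooth vectors. It splits as the Hilbert sum of its restrictions to the summands, each of which is essentially self-adjoint with domain $\mc H_\pi^2$, and the claimed decomposition of $H^2(S\M)$ follows.
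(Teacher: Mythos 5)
The paper does not prove Theorem~\ref{thm:decomp}; it simply cites \cite[Ch.~8.6]{taylor}, so there is no in-paper argument for your proposal to diverge from. That said, your sketch does reproduce the standard proof accurately, and the pieces line up with the conventions used in this paper. The Hilbert--Schmidt argument via $R(\varphi)$ for $\varphi\in C_c^\infty(G)$ is the usual Gelfand--Graev--Piatetski-Shapiro mechanism for discreteness and finite multiplicities on a cocompact quotient. Your identification of $m(\pi)$ for spherical $\pi$ via $L^2(\G\backslash G)^K\cong L^2(\M)$ is correct: on $V_0$ one has $\Xi=0$, so $\Omega=-4H^2-4B^2=4\Delta_\M$ by Proposition~\ref{prop:geometricoperators}, and the $\pi$-isotype of the $K$-fixed subspace is exactly the $\tfrac14\lambda_\pi$-eigenspace; conversely a $\Delta_\M$-eigenfunction has nonzero projection only onto those $\pi$ with $\lambda_\pi=4\eta$, and among unitary irreducibles with a $K$-fixed vector and given Casimir value there is only one. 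The discrete-series count matches as well: the lowest $K$-type of $\pi_{\pm 2n}^\pm$ is $V_{\pm n}$ (annihilated by $X_\mp$ since $-4X_\mp X_\pm=(2k\pm 1)^2+\lambda_\pi-1$ vanishes at $k=\mp n$), a weight-$2n$ automorphic form corresponds precisely to $K$-type $n$, and Riemann--Roch plus Serre duality give $\dim H^0(\M,K_\M^{\otimes n})=(2n-1)(g-1)$ for $n\ge 2$ and $g$ for $n=1$, matching $m(\pi_{\pm 2n}^\pm)$ and $m(\pi_{\pm 2}^\pm)$ in the statement; the delicate point you flag, matching the automorphy factor $j(g,i)^{-2n}$ to the $K$-type normalization $\pi(\exp\theta\Xi)=e^{ik\theta}$ on $V_k$, does work out in this normalization. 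Finally, your Sobolev argument is the intended one: $I-H^2-B^2-\Xi^2=I+\tfrac14\Omega-2\Xi^2$ lies in $\mc U(\mf g)$, hence preserves the $G$-isotypic decomposition, and its restriction to each summand defines $\mc H_\pi^2$ as in Section~\ref{sec:sobolev}. Two small points worth making explicit: (i) the trivial representation occurring exactly once is the $\eta=0$ case of your spherical count (constants, since $\M$ is connected), and (ii) the invariance of the decomposition under $-\Xi^2$ rests on the decomposition being a $G$-module decomposition, not on $\Xi^2$ being central.
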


\subsection{Perturbation Theory}\label{sec:pertth}

We want to collect some basic results from perturbation theory for linear operators that can be found in \cite{kato}.
First, we introduce families of operators we want to deal with.

\begin{definition}[{see \cite[Ch. VII \S2.1]{kato}}]
 A family $T(x)$ of closed operators on a Banach space $X$ where $x$ is an element in a domain $D\subseteq \C$ is called \emph{holomorphic of type (A)} if the domain of $T(x)$ is independent of $x$ and $T(x)u$ is holomorphic for every $u\in\operatorname{dom}(T(x))$.  
\end{definition}

Without loss of generality let us assume that 0 is contained in the domain $D$. We call $T=T(0)$ the unperturbed operator and $A(x)=T(x)-T$ the perturbation. Furthermore, let $R(\zeta,x)= (T(x)-\zeta)\inv$ be the resolvent of $T(x)$ and $R(\zeta)=R(\zeta,0)$. If $\zeta\notin \sigma(T)$ and $1+A(x)R(\zeta)$ is invertible then $\zeta \notin \sigma(T(x))$ and the following identity holds:
\begin{align}\label{eq:resolventformula2}R(\zeta,x)=R(\zeta)(1+A(x)R(\zeta))\inv.\end{align}

Let us assume that $\sigma(T)$ splits into two parts by a closed simple $C^1$-curve $\G$. Then there is $r>0$ such that $R(\zeta,x)$ exists for $\zeta\in \G$ and $|x|<r$. %\cite[Ch. VII Thm. 1.3]{kato}}
If the perturbation is linear (i.e. $T(x)=T+xA$) then a possible choice for $r$ is given by $\min_{\zeta\in\Gamma}\|AR(\zeta)\|^{-1}$.
In particular, we obtain that $\Gamma\subseteq \C\setminus\sigma(T(x))$ for $|x|<r$, i.e. the spectrum of $T(x)$  still splits into two parts by $\Gamma$. Let us define $\sigma_{\interior}(x)$ as the part of $\sigma(T(x))$ lying inside $\Gamma$ and $\sigma_{\ext}(x)=\sigma(T(x))\setminus\sigma_{\interior}(x)$. 
The decomposition of the spectrum gives a $T(x)$-invariant decomposition of the space $X= M_{\interior}(x)\oplus M_{\ext}(x)$ where $M_{\interior}(x)=P(x)X$ and $M_{\ext}(x)=\ker P(x)$ with the bounded-holomorphic projection \[P(x)=-\frac{1}{2\pi i}\int_\Gamma R(\zeta,x)d\zeta.\] 
Furthermore, $\sigma(T(x)|_{M_{\interior}(x)})=\sigma_{\interior}(x)$ and $\sigma(T(x)|_{M_{\ext}(x)})=\sigma_{\ext}(x)$. %\cite[Ch. VII Thm. 1.7]{kato}
To get rid of the dependence of $x$ in the space $M_{\interior}(x)$ we will use the following proposition.

\begin{proposition}[see {\cite[Ch. II \S4.2]{kato}}]
Let $P(x)$ be a bounded-holomorphic family of projections on a Banach space $X$ defined in a neighbourhood of 0.
 Then there is a bounded-holomorphic family of operators $U(x)\colon X\to X$ such that $U(x)$ is an isomorphism for every $x$ and $U(x)P(0)=P(x)U(x)$. In particular, $U(x) P(0) X = P(x)X$ and $U(x)\ker P(0) = \ker P(x)$.
\end{proposition}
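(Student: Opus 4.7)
The plan is to construct $U(x)$ as the solution of a linear operator-valued ODE driven by a commutator built from $P(x)$ itself. Setting $Q(x) := P'(x)P(x) - P(x)P'(x)$, which is a bounded-holomorphic family on a common neighbourhood of $0$, I would define $U(x)$ as the unique holomorphic solution of
\[
U'(x) = Q(x)U(x), \qquad U(0) = I,
\]
in the Banach algebra of bounded operators on $X$. Existence and holomorphy on a common neighbourhood of $0$ follow from the standard Picard iteration together with local boundedness of $\|Q(x)\|$.

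The algebraic heart of the matter is the identity $[Q(x), P(x)] = P'(x)$, which I would derive as follows. Differentiating $P(x)^2 = P(x)$ gives $P'P + PP' = P'$; multiplying this on the left by $P$ and using $P^2 = P$ yields $PP'P = 0$. A direct expansion then shows
\[
QP - PQ = P'P^2 - PP'P - P^2P' + PP'P = P'P + PP' = P',
\]
which is the desired commutator relation.

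With this identity in hand, I would verify the intertwining property by a uniqueness argument. Set $F(x) := U(x)P(0) - P(x)U(x)$; clearly $F(0) = 0$, and a direct computation using $U' = QU$ gives
\[
F'(x) = Q(x)\bigl(U(x)P(0) - P(x)U(x)\bigr) + \bigl([Q(x),P(x)] - P'(x)\bigr)U(x) = Q(x)F(x),
\]
so uniqueness of solutions to the linear ODE $F' = QF$ with $F(0) = 0$ forces $F \equiv 0$, that is, $U(x)P(0) = P(x)U(x)$. From this intertwining the two statements $U(x)P(0)X = P(x)X$ and $U(x)\ker P(0) = \ker P(x)$ are immediate once invertibility of $U(x)$ is known.

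For invertibility I would produce a companion solution $V(x)$ of $V'(x) = -V(x)Q(x)$ with $V(0) = I$. Then $(VU)'(x) = V'U + VU' = -VQU + VQU = 0$, so $V(x)U(x) \equiv I$, and the symmetric argument gives $U(x)V(x) \equiv I$; thus $U(x)$ is a bounded-holomorphic isomorphism for every $x$ in the neighbourhood. The only non-routine step in this scheme is noticing the commutator identity $[Q,P] = P'$; the remainder is standard holomorphic ODE theory in a Banach algebra.
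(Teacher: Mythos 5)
Your proof is correct and is essentially the standard construction from Kato's Chapter II, Section 4.2, which the paper cites without reproducing: the commutator $Q = P'P - PP'$, the transport ODE $U' = QU$, the key identity $[Q,P] = P'$ obtained from $PP'P = 0$, the uniqueness argument for the intertwining, and the companion solution for the inverse are all precisely Kato's argument. The only point worth stating a little more explicitly is the final invertibility step: $(VU)' = 0$ gives $VU \equiv I$, and for $UV$ one should note that $W = UV$ solves $W' = [Q,W]$ with $W(0) = I$, so uniqueness (since $W \equiv I$ is a solution) gives $UV \equiv I$; your phrase "the symmetric argument" presumably means exactly this, so no gap.
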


Denoting $U(x)^{-1} T(x) U(x)$ as $\widetilde{T}(x)$ we observe \[\sigma(\widetilde T(x)|_{M_{\interior}(0)})= \sigma(\widetilde T(x)) \cap \operatorname{int}(\G) = \sigma(T(x)) \cap \operatorname{int}(\G)\] since $U(x)$ is an isomorphism. Here we denote the interior of $\G$ by $\operatorname{int}( \G)$.

Let us from now on suppose that $\Gamma$    is a circle with radius $\rho$ centered at an eigenvalue $\mu$ of $T$ with finite multiplicity  and encloses  no other eigenvalues of  $T$. Then $\sigma_{\interior}(0)=\{\mu\}$  and $M_{\interior}(0)$ is finite dimensional.
Hence, $\widetilde T(x)|_{M_{\interior}(0)}$ is a holomorphic family of  operators on a finite dimensional vector space. It follows that the eigenvalues of $T(x)$ are continuous as a function in $x$.
In addition to the previous assumptions, let us suppose that the eigenvalue $\mu$ is simple. 
Then $M_{\interior}(0)$ is one-dimensional and $\widetilde T(x)|_{M_{\interior}(0)}$ is a scalar operator.
We obtain that there is a holomorphic function $\mu\colon B_r\to\C$ (with $r=\min_{\zeta\in\Gamma}\|AR(\zeta)\|^{-1}$ as above) such that $\mu(x)$ is an eigenvalue of $T(x)$, $\mu(x)$ is inside $\Gamma$ and $\mu(x)$ is the only part of $\sigma(T(x))$ inside $\Gamma$ since $\sigma_{\interior}(x)=\sigma(\widetilde T(x)|_{M_{\interior}(0)})$. 
As a consequence, \[|\mu (x) -\mu|<\rho \qquad \forall\ |x|<r.\] By  Cauchy's inequality we infer $|\mu^{(n)}|\leq \rho r^{-n}$ for the Taylor series $\mu(x)=\sum  x^n \mu^{(n)}$. Hence, 
\begin{align}\left |\mu(x)-\sum_{n=0}^N x^n \mu^{(n)}\right|\leq \rho\cdot \frac{|x|^{N+1}}{r^{N}(r-|x|)}\quad \forall\ |x|<r.\label{eq:error_estimate}
\end{align}

We now want to calculate the Taylor coefficients of $\mu(x)$ in order to get an approximation of $\mu(x)$  in the case where $X=\mc H$ is a Hilbert space and $T(x)$ is a holomorphic family of type (A) with symmetric $T$ but not necessarily symmetric $T(x)$ for $x\neq 0$.
To this end let $\varphi(x)$ be a normalized holomorphic family of  eigenvectors (obtained from $P(x)$).
Consider the Taylor series $\mu(x)=\sum  x^n \mu^{(n)}$,  $\varphi(x)=\sum  x^n \varphi^{(n)}$ and  $T(x)u=\sum x^n T^{(n)} u $ for every $u \in \dom(T)$ which converges on a disc of positive radius independent of $u$. This is due to the fact that Taylor series of holomorphic functions converge on every disc that is contained in the domain.

We compare the Taylor coefficients in \begin{align*}
(T(x)-\mu(x))\varphi(x)=0\qquad \text{and} \qquad\langle(T(x)-\mu(x))\varphi(x),\varphi(x)\rangle=0
\end{align*}
and obtain \begin{equation*}
(T-\mu^{(0)})\varphi^{(l)}=-\sum_{n=1}^l(T^{(n)}-\mu^{(n)})\varphi^{(l-n)}\end{equation*} and
\begin{align*}
\mu^{(k)}= &\langle T^{(k)}\varphi^{(0)},\varphi^{(0)}\rangle+\sum_{n=1}^{k-1}\langle(T^{(n)}-\mu^{(n)})\varphi^{(k-n)}, \varphi^{(0)}\rangle.
\end{align*}
A fortiori,
\begin{align}
\mu^{(1)}&= \langle T^{(1)}\varphi^{(0)},\varphi^{(0)}\rangle \label{eq:firstderivative} \\ 
\mu^{(2)}&= \langle T^{(2)}\varphi^{(0)},\varphi^{(0)}\rangle+\langle(T^{(1)}-\mu^{(1)})\varphi^{(1)}, \varphi^{(0)}\rangle, \label{eq:secondderivative} \end{align}
where $\varphi^{(1)}$ fulfils \begin{equation}(T-\mu^{(0)})\varphi^{(1)}=-(T^{(1)}-\mu^{(1)})\varphi^{(0)}.\label{eq:derivativevector}
 \end{equation}
  Although $\varphi^{(1)}$ is not uniquely determined by this equation, $\mu^{(2)}$ can be calculated in our setting.
Here $\varphi^{(1)}=v+c\varphi^{(0)}$ with unique $v\in \ker(T^{(0)}-\mu^{(0)})^\perp$ as $T^{(0)}$ is symmetric. 
We infer that 
\begin{align*}
 \langle(T^{(1)}-\mu^{(1)})\varphi^{(1)}, \varphi^{(0)}\rangle&=\langle(T^{(1)}-\mu^{(1)})v, \varphi^{(0)}\rangle-c\mu^{(1)}+c\langle T^{(1)}\varphi^{(0)}, \varphi^{(0)}\rangle \\&= \langle(T^{(1)}-\mu^{(1)})v, \varphi^{(0)}\rangle.
\end{align*}
 Therefore, $\mu^{(2)}$ depends only on $v$ and not on $c$.

\section{Perturbation Theory of the Kinetic Brownian Motion}
\label{sec:der}

 We want to establish the limit $\gamma\to \infty$ of the spectrum of $P_\g$. 
 To do so we write $P_\g= \frac{\g^2}2 (\Delta_\bbS-2\g^{-1} X )=\frac{\g^2}2 T(-2\g^{-1})$ where  $T(x)=\Delta_\bbS+xX$ and we want to use the methods established in Chapter \ref{sec:pertth}. 
 
In order to have finite dimensional eigenspaces and holomorphic families of type (A) we will use the orthogonal decomposition of $L^2(S\M)$ derived in Theorem \ref{thm:decomp}:
\[L^2(S\M)\simeq L^2(\G\backslash G)=\bigoplus\nolimits_{\pi\in\widehat G} m(\pi) \mc H_\pi.\]
Here, $T(x)$ is given by $-\Xi^2 + x H$ by Proposition \ref{prop:geometricoperators}.  We denote the restriction of $T(x)$ to $\mc H_\pi$ by $T_\pi(x)$ and its resolvent by $R_\pi(\zeta,x)$ and $R _\pi(\zeta)=R_\pi(\zeta,0)$.

\begin{remark}
It follows from Section~\ref{sec:sobolev} that $\dom(T_\pi(x))=\{u\in \mc H_\pi \mid T_\pi(x) u \in \mc H_\pi\}=\mc H^2_\pi$. Furthermore,  $T_\pi(x)$ is closed as a restriction of a closed operator. 
  We conclude that $T_\pi(x)$ is a holomorphic family of type (A) on the complex plane with domain $ \mc H^2_\pi$.
\end{remark}

\begin{remark}
 One can realize the principle series representation on $\mc H_{\pi_{is}} = L^2(S^1)$ (see \cite[Ch. 4.3]{taylor}). Here $-\Xi^2$ is taken to $\Delta_{S^1}$ such that $\mc H_{\pi_{is}}^2=H^2(S^1)$. The remark from above  then follows from the elliptic estimate $$\|u\|_{H^2(S^1)}\leq C(\|u\|_{L^2(S^1)}+\| (\Delta_{S^1}+ a(\vartheta) \partial _\vartheta +b(\vartheta) )u\|_{L^2(S^1)})$$ noting that $H$ is a first order differential operator. 
\end{remark}

We use the structure of the $G$-representations to obtain a more precise version of elliptic regularity.

\begin{lemma}\label{la:Tbdd}
 $H$ is $\Xi^2$-bounded on $\mc H_\pi$, more precisely $$\|Hu\|^2\leq \frac{|\lambda_\pi|}{4}\|u\|^2 + \frac 3 2\|\Xi^2 u\|^2 \quad\text{with}\quad u\in\mc H_\pi^2.$$
\end{lemma}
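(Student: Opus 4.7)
The plan is to reduce the estimate to pointwise inequalities in the $K$-type decomposition~\eqref{eq:K-types} via the Casimir element. Since $H = -\frac{1}{2}(X_+ + X_-)$ (from $X_\pm = -H \mp iB$), the elementary bound $\|a+b\|^2 \leq 2(\|a\|^2 + \|b\|^2)$ gives
$$\|Hu\|^2 \leq \frac{1}{2}\|X_+u\|^2 + \frac{1}{2}\|X_-u\|^2$$
so it suffices to control each $\|X_\pm u\|^2$.

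Next I would use the Casimir identity from the notation section, $-4 X_\mp X_\pm = \Omega - 4\Xi^2 \mp 4i\Xi$, together with $X_\pm^* = -X_\mp$, to obtain
$$\|X_\pm u\|^2 = \langle u, -X_\mp X_\pm u\rangle = \frac{\lambda_\pi}{4}\|u\|^2 + \|\Xi u\|^2 \mp i\langle u, \Xi u\rangle,$$
where I used that $\Xi$ is skew-adjoint, so $\langle u, \Xi^2 u\rangle = -\|\Xi u\|^2$ and $\langle u, \Xi u\rangle \in i\R$ (making the cross term real). Adding the two equations, the cross terms cancel and one arrives at
$$\|X_+u\|^2 + \|X_-u\|^2 = \frac{\lambda_\pi}{2}\|u\|^2 + 2\|\Xi u\|^2, \qquad \text{hence} \qquad \|Hu\|^2 \leq \frac{\lambda_\pi}{4}\|u\|^2 + \|\Xi u\|^2.$$

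Finally, I would absorb $\|\Xi u\|^2$ into $\|\Xi^2 u\|^2$ and $\|u\|^2$ with the right constants. Expanding $u = \sum_k c_k \phi_k$ in the $K$-type basis so that $\|\Xi u\|^2 = \sum k^2 |c_k|^2$ and $\|\Xi^2 u\|^2 = \sum k^4 |c_k|^2$, the desired bound reduces to the pointwise scalar inequality $\frac{\lambda_\pi}{4} + k^2 \leq \frac{|\lambda_\pi|}{4} + \frac{3}{2}k^4$ for every $k$ occurring as a $K$-type of $\mc H_\pi$. For $\lambda_\pi \geq 0$ (principal, complementary, trivial, and the $n=1$ discrete series) this collapses to $k^2 \leq \frac{3}{2}k^4$, trivial at $k=0$ and clear for $|k| \geq 1$. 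For $\lambda_\pi < 0$ (discrete series with $n \geq 2$) the $K$-types satisfy $|k| \geq n \geq 2$, so $k^2 \leq \frac{3}{2}k^4$ again holds and the positive buffer $(|\lambda_\pi|-\lambda_\pi)/4 > 0$ only helps.

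The one point requiring care is carrying out the Casimir manipulation on the common domain $\mc H_\pi^2$ from Section~\ref{sec:sobolev} and tracking the purely imaginary cross terms so that they cancel correctly. Once those bookkeeping issues are in place, the rest is a routine pointwise check on each $K$-type, and I do not foresee a serious obstacle.
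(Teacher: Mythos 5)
Your proof is correct, and it takes a genuinely different route from the paper's. The paper computes $\|Hu\|^2 = \langle -H^2u,u\rangle$ directly in the $K$-type Fourier basis, which produces both diagonal terms $\langle (X_+X_-+X_-X_+)\phi_n,\phi_n\rangle$ and off-diagonal terms $\langle X_\pm^2\phi_n,\phi_{n\pm 2}\rangle$, and then controls the off-diagonals with a Cauchy--Schwarz estimate on the doubly infinite sum. You instead apply $\|a+b\|^2 \leq 2(\|a\|^2+\|b\|^2)$ at the outset, which eliminates all cross terms before any Fourier analysis happens, and then the Casimir identity $-4X_\mp X_\pm = \Omega - 4\Xi^2 \mp 4i\Xi$ together with $X_\pm^\ast = -X_\mp$ gives an exact formula
\[
\|X_+u\|^2 + \|X_-u\|^2 = \frac{\lambda_\pi}{2}\|u\|^2 + 2\|\Xi u\|^2,
\]
so no Cauchy--Schwarz is needed at all. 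Your intermediate bound $\|Hu\|^2 \leq \frac{\lambda_\pi}{4}\|u\|^2 + \|\Xi u\|^2$ is clean and basis-free, and the final absorption $\frac{\lambda_\pi}{4} + k^2 \leq \frac{|\lambda_\pi|}{4} + \frac{3}{2}k^4$ is a trivial pointwise check valid for every integer $k$. The price you pay is a factor-of-two loss from the elementary norm inequality, but since the target has a constant $\frac{3}{2}$ in front of $\|\Xi^2u\|^2$ this loss is harmless; the paper's more elaborate route does not in the end yield a sharper constant because its Cauchy--Schwarz step also gives something away. Your version is shorter, avoids the somewhat delicate bookkeeping of the off-diagonal Fourier coefficients, and makes the domain and self-adjointness issues (skew-adjointness of $\Xi$, $X_\pm^\ast = -X_\mp$ on $\mc H_\pi^2$) more transparent.
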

\begin{proof}
 Let us express $u\in \mc H_\pi^2$ in its Fourier expansion according to $K$-types (see \eqref{eq:K-types}), i.e. $u=\sum_{n\in\Z} a_n \phi_n\in\mc H_\pi^2$. Since $H=-\frac 12 (X_++X_-)$ with the raising/lowering operators $X_\pm \colon V_n\to V_{n\pm 1}$ we can compute
 \begin{align*}
  \|Hu\|^2=& \langle -H^2 u,u\rangle\\
    =&-\sum_n a_n \overline{a_n}\langle H^2\phi_n,\phi_n\rangle + a_n \overline{a_{n+2}}\langle H^2 \phi_n,\phi_{n+2}\rangle+ a_n \overline{a_{n-2}} \langle H^2\phi_n,\phi_{n-2}\rangle\\
    =&-\frac 14 \sum_n |a_n|^2 \langle (X_+X_-+X_-X_+)\phi_n,\phi_n\rangle + a_n \overline{a_{n+2}}\langle X_+^2 \phi_n,\phi_{n+2}\rangle+\\
    \phantom=&\phantom{-\frac 14 \sum}+a_n \overline{a_{n-2}} \langle X_-^2\phi_n,\phi_{n-2}\rangle.
 \end{align*}

Since $\Omega =4\Xi^2-2(X_+X_-+X_-X_+)$ and $\Xi=in$ on $V_n$ we infer that \[\langle (X_+X_-+X_-X_+)\phi_n,\phi_n\rangle=-2n^2-\frac 12 \lambda_\pi.\]
Moreover, $\|X_\pm\|_{V_n\to V_{n\pm 1} }= \frac 12 ((2n\pm1)^2+\lambda_\pi -1)^{1/2}$ by Section \ref{sec:sl2}.
% and $\|X_-\|_{V_n\to V_{n-1}} = \frac 12 ((2n-1)^2+\lambda_\pi -1)^{1/2}$
Hence, \[|\langle X_\pm^2\phi_n,\phi_{n\pm 2}\rangle| = \frac 14 ((2n\pm1)^2+\lambda_\pi -1)^{1/2} ((2n\pm3)^2+\lambda_\pi -1)^{1/2}.\]
With the Cauchy-Schwarz-inequality we obtain
\begin{align*}
  \big|\sum_n a_n &\overline{a_{n\pm2}}\langle X_\pm^2 \phi_n,\phi_{n\pm2}\rangle\big|^2\\
   &\leq \frac 1{16} \sum_n |a_n|^2|(2n\pm1)^2+\lambda_\pi -1|\sum_n |a_{n\pm2}|^2|(2n\pm3)^2+\lambda_\pi -1|\\
&=  \frac 1{16} \sum_n |a_n|^2|(2n\pm1)^2+\lambda_\pi -1|\sum_n |a_{n}|^2|(2n\mp1)^2+\lambda_\pi -1|\\
&\leq\frac1{16}\left(\sum_n|a_n|^2(|\lambda_\pi|+4n^2+4|n|)\right)^2.\\
\end{align*}
We conclude \begin{align*}
\|Hu\|^2&\leq\frac 14 \sum_n |a_n|^2 (2n^2+\frac 12 |\lambda_\pi|+ 2\cdot\frac 14 (|\lambda_\pi| + 4n^2 +4|n|))\\
&\leq \frac 14 |\lambda_\pi| \|u\|^2+\frac 32 \|\Xi^2 u\|^2.\qedhere
\end{align*}

\end{proof}

The eigenspaces of the unperturbed operator $-\Xi^2$ are $V_0$ and $V_k\oplus V_{-k}$ which are finite dimensional. 
As we have seen in Section \ref{sec:pertth} the eigenvalues of a holomorphic family of type (A) are continuous as a function of $x$ in this case.   We deduce that for the eigenvalues $\mu(x)$ of $T_\pi(x)$ that arise from non-zero eigenvalues $\mu = \mu(0)$ of $-\Xi^2$ the limit $\gamma\to \infty$ of $\frac{\gamma^2}2\mu(2 \gamma\inv)$, which is an eigenvalue of $P_\gamma$, is $\infty$.
Therefore, we  do not care about non-zero eigenvalues at first.
Since $\Xi$ has non-zero spectrum in the discrete series representation  we start with a principle or complementary series representation $(\pi,\mc H_\pi)$. %Here $V_k\neq 0$ for every $k$.

 Here the eigenspace  for the eigenvalue 0 of $T_\pi(0)$ is  $\langle\phi_0\rangle$ which is one-dimensional. % 
This means  that there is an analytic eigenvalue $\mu(x) = \sum x^n \mu^{(n)}$ of $T_\pi(x)$ and its eigenvector $\varphi(x)=\sum x^n \varphi^{(n)}$ is analytic on some $B_r(0)$ which will be determined later on.
 Note that $\mu^{(0)} =0 $, $\varphi^{(0)}=\phi_0$, $T=T^{(0)}=-\Xi^2$ and $T^{(1)}=H$ in this case.
 We can use Equation \eqref{eq:firstderivative} from Section \ref{sec:pertth}:  
\begin{align*}
\mu^{(1)}=\langle T^{(1)} \varphi^{(0)}, \varphi^{(0)}\rangle=\langle H \phi_0,\phi_0\rangle=\frac12\langle - (X_++X_-)\phi_0,\phi_0\rangle.
\end{align*}
 Due to the fact that $X_\pm$ are raising respectively lowering operators, i.e. $X_\pm V_k\subseteq V_{k\pm1}$, we conclude that $\mu'(0)=\mu^{(1)}=0$. 

We now want to find the second derivative $\mu''(0)= 2 \mu^{(2)}$ of $\mu$. 
According to Section \ref{sec:pertth} we first have to calculate $\varphi^{(1)}$ via $-\Xi^2\varphi^{(1)}=-H\phi_0$ (see Equation~ \eqref{eq:derivativevector}). 
Notice that $-H\phi_0\in V_{-1}\oplus V_1 = \{u\mid -\Xi^2u =u\}$. Furthermore $\ker(-\Xi^2)=V_0 = \langle \phi_0\rangle$, and consequently  $\varphi^{(1)}=-H\phi_0+c\phi_0$ for some $c\in\C$. Let us recall that $\mu''(0)$ is independent of $c$. 
Consequently by Equation \eqref{eq:secondderivative}, \begin{align*}
\mu''(0) = 2 \mu^{(2)}=& 2\langle H(-H\phi_0), \phi_0\rangle=-\frac{1}{2}\langle (X_++X_-)^2\phi_0,\phi_0\rangle\\ =& -\frac12 \langle ( X_+^2+ X_+X_-+ X_-X_++ X_-^2)\phi_0,\phi_0\rangle.
\intertext{Again, $X_\pm$ are raising/lowering operators. Therefore, }
\mu''(0)&=-\frac12\langle (X_+X_- +X_-X_+)\phi_0,\phi_0\rangle \\
&=\frac 14 \langle \Omega \phi_0,\phi_0\rangle = \frac {\lambda_\pi}{4}
\end{align*}
as the Casimir operator $\Omega$ equals $4\Xi^2-2(X_+X_-+X_-X_+)$ and $\Xi \phi_0=0$.

Summarizing, we arrived at the following situation.

\begin{proposition}\label{thm:evofTpi}
 For a principle or complementary series representation $\pi$ there is  $r_\pi>0$ and an analytic function $\mu\colon B_{r_\pi}(0) \to \C$ such that $\mu(x)$ is an eigenvalue of $T_\pi(x)$ with multiplicity 1 and $\mu (x)=x^2\frac 12 \frac{\lambda_\pi}4 + \mc O (x^{3})$. A fortiori, $x^{-2}\mu (x) \to \frac 12 \frac{\lambda_\pi}{4}$ as $x\to 0$.
\end{proposition}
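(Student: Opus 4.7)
My plan is to apply the abstract perturbation framework of Section~\ref{sec:pertth} to the holomorphic type-(A) family $T_\pi(x) = -\Xi^2 + xH$ on $\mc H_\pi$. Since $\pi$ is principle or complementary, $-\Xi^2$ has discrete spectrum $\{k^2 : k \in \Z\}$ with each eigenspace $V_k$ one-dimensional, so the eigenvalue $0$ is simple with eigenspace $V_0 = \langle \phi_0\rangle$. Surrounding $0$ by a small circle $\Gamma$ of radius $\rho<1$ separates it from the remaining spectrum $\{1,4,9,\dots\}$ of $-\Xi^2$.

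The first step I would carry out is verifying that the perturbation radius $r_\pi = \min_{\zeta\in\Gamma}\|HR_\pi(\zeta)\|^{-1}$ is strictly positive. This is the genuine analytic content: it requires $H$ to be relatively bounded with respect to $\Xi^2$, which is precisely what Lemma~\ref{la:Tbdd} provides (with bound $\sqrt{3/2}\|\Xi^2 u\|^2 + \tfrac{|\lambda_\pi|}{4}\|u\|^2$). Since $R_\pi(\zeta)$ maps into $\mc H_\pi^2=\dom(\Xi^2)$ and has operator norm uniformly bounded on $\Gamma$ (as $\Gamma$ lies at positive distance from $\sigma(-\Xi^2)$), composing shows $HR_\pi(\zeta)$ is uniformly bounded on $\Gamma$, hence $r_\pi>0$. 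Section~\ref{sec:pertth} then yields the desired analytic simple eigenvalue $\mu\colon B_{r_\pi}(0)\to\C$.

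It remains to compute the first few Taylor coefficients. With $\mu^{(0)}=0$ and $\varphi^{(0)}=\phi_0$, formula \eqref{eq:firstderivative} gives $\mu^{(1)}=\langle H\phi_0,\phi_0\rangle$; writing $H=-\tfrac12(X_++X_-)$ and using that $X_\pm$ maps $V_0$ into $V_{\pm 1}\perp V_0$ makes this vanish. For $\mu^{(2)}$, I would solve \eqref{eq:derivativevector}: since $H\phi_0\in V_{-1}\oplus V_1$ already lies in the eigenspace of $-\Xi^2$ for eigenvalue $1$, a valid choice is $\varphi^{(1)}=-H\phi_0 + c\phi_0$, and by the remark after \eqref{eq:derivativevector} the value of $\mu^{(2)}$ is independent of $c$. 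Plugging into \eqref{eq:secondderivative} and discarding the $X_\pm^2$ terms by orthogonality of $K$-types reduces the computation to $-\tfrac12\langle(X_+X_-+X_-X_+)\phi_0,\phi_0\rangle$. Using $\Omega=4\Xi^2-2(X_+X_-+X_-X_+)$ together with $\Xi\phi_0=0$ and Schur's lemma $\pi(\Omega)=\lambda_\pi$ then gives $\mu^{(2)}=\tfrac14\langle\Omega\phi_0,\phi_0\rangle=\tfrac{\lambda_\pi}{8}$.

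Assembling these pieces produces the expansion $\mu(x)=\tfrac12\cdot\tfrac{\lambda_\pi}{4}\,x^2+\mathcal O(x^3)$ as claimed, and dividing by $x^2$ gives the asserted limit. The main obstacle is really the positivity of $r_\pi$ via Lemma~\ref{la:Tbdd}; once $H$ is established as a relatively bounded (in fact, in the right sense, small) perturbation of $-\Xi^2$ on each irreducible component, the remaining arguments are purely algebraic manipulations in $\mc U(\mf g_\C)$ and direct applications of the formulas of Section~\ref{sec:pertth}.
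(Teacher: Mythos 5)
Your proposal follows the paper's own argument exactly: identify $T_\pi(x)=-\Xi^2+xH$ as a holomorphic family of type~(A) on $\mc H_\pi^2$, isolate the simple eigenvalue $0$ of $-\Xi^2$ with eigenspace $V_0$ in the principle/complementary case, apply Kato's theory to get the analytic branch $\mu(x)$, and compute $\mu^{(1)}=0$ (raising/lowering) and $\mu^{(2)}=\lambda_\pi/8$ (Casimir identity on $V_0$).

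One caveat worth flagging: your computation of $\mu^{(2)}$ contains two offsetting factor-of-two slips. After discarding the off-diagonal $X_\pm^2$ terms, formula~\eqref{eq:secondderivative} reduces $\mu^{(2)}=-\langle H^2\phi_0,\phi_0\rangle$ to $-\tfrac14\langle(X_+X_-+X_-X_+)\phi_0,\phi_0\rangle$, not $-\tfrac12\langle(X_+X_-+X_-X_+)\phi_0,\phi_0\rangle$ (the latter is the chain the paper writes for $\mu''(0)=2\mu^{(2)}$); and $\tfrac14\langle\Omega\phi_0,\phi_0\rangle=\lambda_\pi/4$, not $\lambda_\pi/8$. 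The two errors cancel, so your final value $\mu^{(2)}=\lambda_\pi/8$ is correct. Also, Lemma~\ref{la:Tbdd} gives the bound $\tfrac32\|\Xi^2u\|^2+\tfrac{|\lambda_\pi|}{4}\|u\|^2$, not $\sqrt{3/2}\,\|\Xi^2u\|^2+\tfrac{|\lambda_\pi|}{4}\|u\|^2$.
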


We want to determine error estimates for the eigenvalues  and an lower bound for $r_\pi$ used above.

Let $\G$ be the circle with radius $\frac 12$ centered at 0. 
Hence the spectrum of $T_\pi(0)$  for a principle or complementary series representation is separated by $\G$ where the only eigenvalue inside $\G$ is 0. 
As we have seen in   Section \ref{sec:pertth} a choice for $r_\pi$ is $r_\pi=\min_{\zeta\in\Gamma} \|HR_\pi(\zeta)\|^{-1}$. 

\begin{lemma}\label{la:normhr}
Let $\sigma$ be the spectrum of $-i\Xi$ on $\mc H_\pi$, i.e. $\sigma = \{k\mid k\in \Z\}$ if $\pi = \pi_{is}$ or $\pi = \pi_s$ or $\sigma = \{k\mid\pm k\geq n\}$ if $\pi = \pi_{\pm 2n}^\pm$. For $\zeta \in \C\setminus \sigma^2$ we have
$$ \|R_\pi(\zeta)\| = \sup_{k\in \sigma} |k^2-\zeta|^{-1}.$$
Addionally we can estimate:
\[\|HR_\pi(\zeta)\|^2\leq \left(\frac{|\lambda_\pi|}{4}+3|\zeta|^2\right) \|R_\pi(\zeta)\|^2 +3.\]
\end{lemma}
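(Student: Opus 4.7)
The plan is to exploit the $K$-type decomposition $\mc H_\pi=\widehat\bigoplus_{k\in\Z}V_k$ from \eqref{eq:K-types} to compute $\|R_\pi(\zeta)\|$ exactly, and then to apply Lemma~\ref{la:Tbdd} to the vector $u=R_\pi(\zeta)v$ to get the second estimate.

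For the first identity I would begin by noting that $\pi(\Xi)\phi_k=ik\phi_k$, so $-\Xi^2$ acts on $V_k$ as the scalar $k^2$, and hence $-\Xi^2$ is the direct sum of scalar operators $k^2\cdot\id_{V_k}$ with $k\in\sigma$ (where $\sigma$ is indeed $\Z$ in the principal/complementary case and the appropriate half-lattice in the discrete series). Consequently the resolvent $R_\pi(\zeta)=(-\Xi^2-\zeta)\inv$ preserves each $V_k$ and acts on it as multiplication by $(k^2-\zeta)\inv$. Since the $V_k$ are mutually orthogonal, the operator norm of such a diagonal operator equals the supremum of the absolute values of its scalars, yielding
\[\|R_\pi(\zeta)\|=\sup_{k\in\sigma}|k^2-\zeta|\inv.\]

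For the second estimate I would fix $v\in\mc H_\pi$ with $\|v\|=1$ and put $u\coloneqq R_\pi(\zeta)v\in \mc H_\pi^2$. Then by definition $(-\Xi^2-\zeta)u=v$, that is $\Xi^2u=-\zeta u-v$, so the elementary inequality $(a+b)^2\leq 2a^2+2b^2$ gives
\[\|\Xi^2u\|^2\leq 2|\zeta|^2\|u\|^2+2\|v\|^2=2|\zeta|^2\|u\|^2+2.\]
Plugging this into Lemma~\ref{la:Tbdd} yields
\[\|Hu\|^2\leq\frac{|\lambda_\pi|}{4}\|u\|^2+\frac{3}{2}\bigl(2|\zeta|^2\|u\|^2+2\bigr)=\Bigl(\frac{|\lambda_\pi|}{4}+3|\zeta|^2\Bigr)\|u\|^2+3.\]
Since $\|u\|\leq\|R_\pi(\zeta)\|$, taking the supremum over $\|v\|=1$ gives the asserted bound.

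There is no real obstacle here: both statements follow by unwinding the $K$-type decomposition and combining it with the previously proven $\Xi^2$-boundedness of $H$. The only subtlety worth a sentence is verifying that $u$ genuinely lies in $\mc H_\pi^2=\dom(H)\cap\dom(\Xi^2)$, which is precisely the content of the identification $\mc H_\pi^2=\{u\in\mc H_\pi\mid \Xi^2u\in\mc H_\pi\}$ from Section~\ref{sec:sobolev} applied to $\zeta\notin\sigma^2=\sigma(-\Xi^2)$.
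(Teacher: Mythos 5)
Your proof is correct and follows essentially the same route as the paper: the first identity comes from diagonalizing $-\Xi^2$ over the $K$-types, and the second estimate comes from combining Lemma~\ref{la:Tbdd} with the identity $\Xi^2 R_\pi(\zeta)=-1-\zeta R_\pi(\zeta)$ and the elementary inequality $(a+b)^2\leq 2a^2+2b^2$. The only cosmetic difference is that you work pointwise with $u=R_\pi(\zeta)v$ while the paper phrases the same computation directly in terms of operator norms.
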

\begin{proof}
 Let us first evaluate the norm of $R_\pi(\zeta)$. On the one hand $R_\pi(\zeta)\phi_k=(k^2-\zeta)\inv\phi_k$ and we infer $\|R_\pi(\zeta)\|\geq |k^2-\zeta|\inv$ for all $k\in \sigma$. On the other hand, \[\left\|R_\pi(\zeta)u\right\|=\left\|\sum_{k\in \sigma} a_k (k^2-\zeta)\inv \phi_k\right\|=\sqrt{\,\sum_{k\in \sigma} |a_k|^2 |k^2-\zeta|^{-2}}\leq \sup_{k\in \sigma} |k^2-\zeta|\inv \|u\|\]
 for $u=\sum_k a_k\phi_k$. Thus $\|R_\pi(\zeta)\| = \sup_{k\in \sigma} |k^2-\zeta|\inv$.
 
 Using Lemma \ref{la:Tbdd} it follows that 
\begin{align*}
\|HR_\pi(\zeta)\|^2&\leq \frac {|\lambda_\pi|} 4 \|R_\pi(\zeta)\|^2+ \frac 32 \left\|-\Xi^2\left(-\Xi^2 -\zeta\right)\inv \right\|^2\\
&\leq \frac {|\lambda_\pi|} 4 \|R_\pi(\zeta)\|^2+ \frac 32 \left\| 1+\zeta R_\pi(\zeta)\right\|^2\\
&\leq \left(\frac{|\lambda_\pi|}{4}+3|\zeta|^2\right)\|R_\pi(\zeta)\|^2 +3
\end{align*}
where we used $(x+y)^2\leq 2(x^2+y^2)$ in the last step.
\end{proof}

\begin{korollar}
\begin{enumerate}[(i)]
 \item 
 Let $\pi$ be a principle or complementary series representation. Then $R_\pi(\zeta,x)$ exists for all $|\zeta|\geq \frac 12$, $\Re \zeta\leq \frac 12$ and $|x|< (\lambda_\pi+6)^{-1/2}$ and we have $$\| R_\pi(\zeta,x)\|\leq |\zeta|\inv \left(1-|x|\sqrt{\lambda_\pi+6}\right)\inv.$$ 
 \item  
 Let  $\pi$ be a discrete series representation $\pi_{\pm 2n}^\pm$. Then  $R_\pi(\zeta,x)$ exists for all $\Re \zeta\leq \frac 12$ and $|x|<1/\sqrt{32}$ and we have $$\| R_\pi(\zeta,x)\|\leq |\zeta-n^2|\inv \left(1-|x|\sqrt{32}\right)\inv.$$ 
\end{enumerate}
\label{cor:resofT}
\end{korollar}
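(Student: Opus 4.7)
My plan is to apply the resolvent identity \eqref{eq:resolventformula2} directly. Since $T_\pi(x)=-\Xi^2+xH$ is a linear perturbation, the identity reads
\[ R_\pi(\zeta,x)=R_\pi(\zeta)(I+xHR_\pi(\zeta))\inv, \]
and the factor $(I+xHR_\pi(\zeta))\inv$ exists and is bounded by $(1-|x|\,\|HR_\pi(\zeta)\|)\inv$ as soon as $|x|\cdot\|HR_\pi(\zeta)\|<1$, by Neumann series. The whole proof therefore reduces to two ingredients: an explicit value for $\|R_\pi(\zeta)\|$, and an upper bound on $\|HR_\pi(\zeta)\|$ of the right form.

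The first part of Lemma~\ref{la:normhr} already tells us $\|R_\pi(\zeta)\|=\sup_{k\in\sigma}|k^2-\zeta|\inv$, so I only have to identify the minimizer of $|k^2-\zeta|$ on $\sigma$. For $k^2\geq\ell^2$ with both $k,\ell\in\sigma$, a short algebraic manipulation gives
\[ |k^2-\zeta|^2-|\ell^2-\zeta|^2=(k^2-\ell^2)(k^2+\ell^2-2\Re\zeta), \]
which under the hypothesis $\Re\zeta\leq 1/2$ is nonnegative whenever $k^2+\ell^2\geq 1$. Hence the supremum is attained at the smallest element of $\sigma$: $k=0$ for the principal and complementary series, yielding $\|R_\pi(\zeta)\|=|\zeta|\inv$, and $k=\pm n$ for the discrete series $\pi^\pm_{\pm 2n}$, yielding $\|R_\pi(\zeta)\|=|n^2-\zeta|\inv$.

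Substituting these into the second estimate of Lemma~\ref{la:normhr} handles $\|HR_\pi(\zeta)\|$. In case (i) we have $\lambda_\pi>0$, so $|\lambda_\pi|=\lambda_\pi$, and the assumption $|\zeta|\geq 1/2$ gives $\lambda_\pi/(4|\zeta|^2)\leq\lambda_\pi$; consequently $\|HR_\pi(\zeta)\|^2\leq\lambda_\pi+6$. In case (ii) one has $|\lambda_\pi|/4=n(n-1)$; combining the bound $|n^2-\zeta|\geq n^2-1/2\geq n^2/2$ (which yields $n(n-1)/|n^2-\zeta|^2\leq 2$) with the triangle inequality $|\zeta|\leq n^2+|n^2-\zeta|\leq 3|n^2-\zeta|$ (which yields $3|\zeta|^2/|n^2-\zeta|^2\leq 27$) gives the crude uniform bound $\|HR_\pi(\zeta)\|^2\leq 32$.

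Plugging into the Neumann-series estimate produces exactly the two inequalities stated in the corollary, valid in the claimed ranges of $x$, since $1-|x|\,\|HR_\pi(\zeta)\|\geq 1-|x|\sqrt{\lambda_\pi+6}$ (resp.\ $\geq 1-|x|\sqrt{32}$). The only really delicate step is the determination of the minimizer of $|k^2-\zeta|$ on $\sigma$, which uses the half-plane constraint $\Re\zeta\leq 1/2$ essentially; all remaining estimates are arithmetic bookkeeping built on top of Lemma~\ref{la:normhr}.
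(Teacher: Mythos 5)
Your proof is correct and follows essentially the same route as the paper: apply the resolvent identity \eqref{eq:resolventformula2} with a Neumann-series bound, compute $\|R_\pi(\zeta)\|$ from Lemma~\ref{la:normhr}, and use the second estimate of that lemma to bound $\|HR_\pi(\zeta)\|$ by $\sqrt{\lambda_\pi+6}$ resp.\ $\sqrt{32}$. The one thing you add that the paper leaves implicit is the short algebraic verification that the infimum of $|k^2-\zeta|$ over $\sigma$ is attained at the smallest $|k|$ when $\Re\zeta\le 1/2$; the numerical bookkeeping for the discrete series is organized a little differently from the paper's but lands on the same constant $32$.
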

\begin{proof}
 Let $\Re \zeta\leq \frac 12$ and $|\zeta|\geq \frac 12$. Then $\|R_\pi(\zeta)\| = \sup_{k\in \Z} |k^2 -\zeta|\inv = |\zeta|\inv$ in the first case. 
 A simple consequence of Lemma \ref{la:normhr} is  \[\|HR_\pi(\zeta)\|^2\leq \frac{\lambda_\pi}{4|\zeta|^2} +6\leq \lambda_\pi +6.\]
 Combining this with Equation \eqref{eq:resolventformula2} we infer that $\zeta\not\in \sigma(T_\pi(x))$ for every $x$ with $|x|<\sqrt{\lambda_\pi+6}$.
 The stated estimate is a consequence of Equation \eqref{eq:resolventformula2}, too. 
 
 In the  case of $\pi=\pi_{\pm 2n}^\pm$, we have $\|R_\pi(\zeta)\| = \sup_{k \geq n} |k^2-\zeta|^{-1} = |n^2-\zeta|\inv$ if $\Re \zeta \leq \frac 12$. 
 Consequently by Lemma \ref{la:normhr},  
 \begin{align*}
  \|HR_\pi(\zeta)\|^2&\leq\left(\frac{|\lambda_\pi|}{4}+3|\zeta|^2\right) \|R_\pi(\zeta)\|^2 +3\\
  &= \frac{(2n-1)^2-1}{4|n^2-\zeta|^2}+3\frac{|\zeta|^2}{|n^2-\zeta|^2} + 3\\
  &\leq\frac{n^2-n}{(n^2-1/2)^2}+3\frac{|\zeta|^2}{|1-\zeta|^2} + 3\\
  &\leq \frac{1}{n^2-1/2}+3\left(1+ \frac{1}{|1-\zeta|}\right)^2+3\\
  &\leq 2+3\cdot 9 +3 = 32.
 \end{align*}
Using again Equation \eqref{eq:resolventformula2}  finishes the proof.
\end{proof}

Now we can prove the following theorem on the spectrum of $T_\pi(x)$.

\begin{theorem}
\begin{enumerate}[(i)]
 \item  Let $\pi$ be a principle or complementary series representation and $r_\pi=(\lambda_\pi +6)^{-1/2}$. Then, there is a holomorphic function $\mu\colon B_{r_\pi}(0)\to \C$ such that $\mu(x)$ is an eigenvalue of $T_\pi(x)$ with multiplicity 1, $|\mu(x)|\leq \frac 12$ and $\sigma(T_\pi(x))\cap \{\zeta\mid \Re \zeta\leq \frac 12\} = \{\mu(x)\}$ for all $x\in B_{r_\pi}(0)$. 
 Furthermore, \[\left|\mu(x)-\frac 12 \frac { \lambda_\pi}4 x^2\right|\leq \frac 12 \frac {|x|^3}{r_\pi^2(r_\pi-|x|)}\qquad \forall \ |x|< r_\pi.\]
 \item Let $\pi$ be a discrete series representation. Then $\Re \sigma (T_\pi(x))> \frac 12$ for all $x$ with $|x|<1/\sqrt{32}$.
\end{enumerate}\label{thm:specT}
\end{theorem}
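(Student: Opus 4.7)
The plan is to apply the perturbation-theoretic framework of Section~\ref{sec:pertth} to the type (A) family $T_\pi(x)=-\Xi^2+xH$, using the resolvent bounds from Corollary~\ref{cor:resofT} as the main analytic input, and to handle the two cases separately since they differ in the bottom of the spectrum of the unperturbed operator $-\Xi^2$.

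For part~(i), note that on $\mc H_\pi$ the spectrum of $T_\pi(0)=-\Xi^2$ is $\{k^2:k\in\Z\}=\{0,1,4,9,\dots\}$, with $0$ a simple eigenvalue whose eigenvector is $\phi_0$. The natural separating contour is the circle $\Gamma=\{|\zeta|=1/2\}$, which isolates $0$ from the rest of the spectrum. By Corollary~\ref{cor:resofT}(i), for every $|x|<r_\pi$ the resolvent $R_\pi(\zeta,x)$ exists throughout $\{|\zeta|\geq 1/2,\ \Re\zeta\leq 1/2\}$ and in particular on $\Gamma$. The general theory of Section~\ref{sec:pertth} then produces a holomorphic simple eigenvalue $\mu(x)$ of $T_\pi(x)$ lying inside $\Gamma$ (so $|\mu(x)|\leq 1/2$), and guarantees that it is the only spectrum in the interior of $\Gamma$. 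Combining this with the absence of spectrum on and outside $\Gamma$ within the half-plane $\{\Re\zeta\leq 1/2\}$ yields $\sigma(T_\pi(x))\cap\{\Re\zeta\leq 1/2\}=\{\mu(x)\}$. The quantitative error bound is then immediate from the Cauchy-type estimate \eqref{eq:error_estimate} applied with $N=2$, $\rho=1/2$, $r=r_\pi$, once we substitute the Taylor coefficients $\mu^{(0)}=\mu^{(1)}=0$ and $\mu^{(2)}=\lambda_\pi/8$ already computed in Proposition~\ref{thm:evofTpi}.

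For part~(ii), the spectrum of $-\Xi^2$ on $\mc H_{\pi_{\pm 2n}^\pm}$ is $\{k^2:\pm k\geq n\}$, bounded below by $n^2\geq 1$. Corollary~\ref{cor:resofT}(ii) directly provides existence of $R_\pi(\zeta,x)$ for all $\Re\zeta\leq 1/2$ as soon as $|x|<1/\sqrt{32}$, so $\sigma(T_\pi(x))$ is disjoint from $\{\Re\zeta\leq 1/2\}$ in this regime, which is the claim.

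There is no serious obstacle here; the analytic heavy lifting has been done in Lemma~\ref{la:normhr} and Corollary~\ref{cor:resofT}, and the perturbation machinery in Section~\ref{sec:pertth} is packaged to apply once one has a resolvent bound both on the contour and in the desired outer region. The only point requiring a moment of care is to verify that the range of validity of Corollary~\ref{cor:resofT}(i)---namely $|\zeta|\geq 1/2$ together with $\Re\zeta\leq 1/2$---covers simultaneously the contour $\Gamma$ and the portion of $\{\Re\zeta\leq 1/2\}$ lying outside $\Gamma$, so that the two uses of the resolvent bound (for the Riesz projector and for excluding further spectrum in the half-plane) are both legitimate; this is true since $\Gamma\subset\{|\zeta|=1/2\}$ automatically satisfies $\Re\zeta\leq 1/2$.
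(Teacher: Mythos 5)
Your proposal is correct and follows essentially the same route as the paper: both rely on the resolvent bounds of Corollary~\ref{cor:resofT} applied on the circle $\Gamma=\{|\zeta|=1/2\}$ and on the remainder of the half-plane $\{\Re\zeta\le 1/2\}$, invoke the type-(A) perturbation machinery of Section~\ref{sec:pertth} to get the simple holomorphic eigenvalue branch, and then plug $\mu^{(0)}=\mu^{(1)}=0$, $\mu^{(2)}=\lambda_\pi/8$ from Proposition~\ref{thm:evofTpi} into the Cauchy estimate \eqref{eq:error_estimate} with $N=2$, $\rho=1/2$, $r=r_\pi$. The only (small) difference is that you spell out explicitly why the resolvent bound rules out spectrum in all of $\{\Re\zeta\le 1/2\}$ outside $\Gamma$, a point the paper leaves implicit in its appeal to Corollary~\ref{cor:resofT}.
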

\begin{proof}
 We have seen before that $\mu(x)$ is the only eigenvalue with absolute value smaller than $\frac 12$ if $|x|< \min_{|\zeta|=1/2} \|HR_\pi(\zeta)\|^{-1}$. Since $\|HR_\pi(\zeta)\|\leq \frac 1{r_\pi}$ by Corollary \ref{cor:resofT} this is the case if $|x|<r_\pi$. 
 In Proposition \ref{thm:evofTpi} we calculated $\mu''(0)=\frac{\lambda_\pi}{4}$ and with Equation \eqref{eq:error_estimate} we obtain the error estimate.
 The  statement about the discrete series that remains to be proven follows directly from Corollary \ref{cor:resofT}.
\end{proof}

\begin{remark}\label{bem:nonuniform}
 Unfortunately, the radius $r_\pi$ depends on $\lambda_\pi$ which is given by $1+s^2$ for $\pi = \pi_{is}$. As $\mc H_{\pi_{is}}$ are contained in $L^2(\G\backslash G)$ for arbitrary large $s$  we do not obtain a uniform bound on $r$. 
 Since \[\sup_{|\zeta|=1/2} \|HR_\pi(\zeta)\| \geq \|HR(1/2)\|\geq \|HR(1/2)\phi_0\|=2\|H\phi_0\| =2\sqrt{\frac 12 \frac{\lambda_\pi}4}\]
 we can not get rid of the dependence on $\lambda_\pi$. 
\end{remark}

Reformulated in terms of $x=-2\g\inv$ we obtain Theorem \ref{thm:evofPg} for the generator of the kinetic Brownian motion on $S\M$.

\begin{proof}[Proof of Theorem \ref{thm:evofPg}]
 As we have seen in Theorem \ref{thm:decomp} $L^2(\G \backslash G)$ decomposes discretely in unitary irreducible representations and the multiplicity of a principle or complementary series representation $\pi$ in $L^2(\G\backslash G)$ is given by the multiplicity of the eigenvalue $\frac {\lambda_\pi} 4$ of $\Delta_\M$.
 Thus, if $\eta$ is a $\Delta_\M$-eigenvalue of multiplicity $n$ then there is a principle or complementary series representation $(\pi,\mc H_\pi)$ such that $\eta=\frac{\lambda_\pi}{4}$ and that occurs $n$ times in $L^2(\G\backslash G)$. 
 For this representation Theorem \ref{thm:specT}  states that there is  $\mu\colon B_{r_\pi}(0)\to \C$ for $r_\pi=(\lambda_\pi+6)^{-1/2}=(4\eta +6)^{-1/2}$ such that $\mu(x)$ is an eigenvalue of $T_\pi(x)$. 
 Since $P_\g=\frac{\g ^2}2T(-2\g\inv)$ and $T_\pi$ is the restriction of $T$ to $\mc H_\pi$ we obtain that $\frac{\g^2}2\mu(-2\g\inv)$ is an eigenvalue with multiplicity $n$ of $P_\g$ if $\g>2\sqrt{4\eta+6}$. The given estimate follows from Theorem \ref{thm:specT} as well. 
\end{proof}

\section{Convergence to Equilibrium}\label{sec:equilibrium}
In this chapter we want to analyse the convergence  of the kinetic Brownian motion to equilibrium. 
As it has been mentioned above this convergence is described by the propagator $e^{-tP_\g}$. 
In general, the resolvent $(A + \zeta)\inv$ of a generator $A$ of a contraction semigroup on a Banach space $X$ is the Laplace transform of $e^{-tA}$ by the Hille-Yosida theorem (e.g. \cite[Thm. X.47a]{reedsimon}). Hence, we  can obtain $e^{-tA}$ by the inverse Laplace transform of $(A+\zeta)\inv$.
More precisely we have the following proposition.

\begin{proposition}[{e.g. \cite[Ch. III Cor. 5.15]{engelnagel}}]\label{prop:inversion}
 If $A$ generates the strongly continuous contraction semigroup $e^{-tA}$ on a Banach space $X$ then we have for all $u\in \dom(A)$ and $w<0$:
 \[e^{-tA}u = \frac 1 {2\pi i} \lim_{n\to \infty} \int _{w-in}^{w+in} e^{-\zeta t} R(\zeta) u\,d\zeta.\]
 %The convergence is uniform in $t$ in compact intervals of $]0,\infty[$.
\end{proposition}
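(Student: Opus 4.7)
The statement is the classical complex inversion formula for the Laplace transform of a strongly continuous contraction semigroup, so the proof is essentially pure semigroup theory. I would proceed in three steps.

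First, I would establish the direct Laplace-transform identity
\[R(\zeta) u \;=\; \int_0^\infty e^{\zeta t}\,e^{-tA} u\,dt \qquad (\Re\zeta < 0,\; u\in X).\]
For $u\in\dom(A)$ this follows by differentiating $t\mapsto e^{\zeta t}e^{-tA}u$ (which stays in $\dom(A)$) to get $(\zeta - A)e^{\zeta t}e^{-tA}u$, integrating from $0$ to $T$, and using $\|e^{-TA}\|\le 1$ together with $\Re\zeta<0$ to discard the boundary term as $T\to\infty$. Extending to all $u\in X$ is immediate by density and continuity of both sides.

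Second, I would fix $w<0$ and $u\in\dom(A)$ and recognise the right-hand side of the stated formula as a Fourier inversion integral. Setting $g(s):= e^{ws}e^{-sA}u$ for $s\ge 0$ and $g(s):=0$ otherwise, the vector-valued function $g$ lies in $L^1(\R,X)\cap L^\infty(\R,X)$ thanks to $w<0$ and the contraction bound. Its Fourier transform is, by Step 1,
\[\widehat g(\tau) \;=\; \int_0^\infty e^{(w-i\tau)s}\,e^{-sA}u\,ds \;=\; R(w-i\tau)u.\]
For $u\in\dom(A)$ one has $\|\tfrac{d}{ds}e^{-sA}u\|=\|Ae^{-sA}u\|\le \|Au\|$, so $g$ is Lipschitz on $(0,\infty)$. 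Classical (vector-valued) Fourier inversion in principal-value form then recovers $g(t)$ at every $t>0$:
\[g(t) \;=\; \frac{1}{2\pi}\lim_{n\to\infty}\int_{-n}^{n} e^{i\tau t}\,R(w-i\tau)u\,d\tau.\]
Substituting $\zeta = w-i\tau$ (so $d\tau = i\,d\zeta$, with the orientation reversal compensating the factor of $i$), multiplying by $e^{-wt}$ to pass from $g$ back to $e^{-tA}u$, and collecting constants gives the stated formula. The extension from $u\in\dom(A)$ to a full statement for $u\in\dom(A)$ (which is all that is claimed) needs no further density argument.

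The main technical obstacle is the non-absolute convergence of the contour integral: the resolvent identity $R(\zeta)u = \zeta^{-1}(R(\zeta)Au - u)$ combined with $\|R(\zeta)\|\le |\Re\zeta|^{-1}$ yields only $\|R(w-i\tau)u\|=O(|\tau|^{-1})$ as $|\tau|\to\infty$, so the integrand is precisely on the borderline of integrability. This is why the statement is formulated as a symmetric principal value $\lim_{n\to\infty}\int_{w-in}^{w+in}$ rather than an ordinary improper integral, and it is also the reason one cannot avoid invoking Dirichlet-type convergence of truncated Fourier integrals (which applies thanks to the BV/Lipschitz regularity of $g$ secured by the assumption $u\in\dom(A)$).
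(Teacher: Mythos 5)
Your proof is correct. Note, though, that the paper does not actually prove this proposition --- it cites it directly from Engel--Nagel (Ch.~III, Cor.~5.15), so there is no in-paper argument to compare against. The Fourier-inversion route you take is the standard one: identify $R(w-i\tau)u$ as the Fourier transform of the cutoff function $g(s)=\mathbf{1}_{[0,\infty)}(s)\,e^{ws}e^{-sA}u$ and invoke a pointwise inversion theorem. Two small technical remarks. First, the Lipschitz bound on $g$ should read $\|g'(s)\|\leq |w|\,\|u\|+\|Au\|$ rather than just $\|Au\|$, since $g'(s)=we^{ws}e^{-sA}u-e^{ws}Ae^{-sA}u$; the conclusion (local bounded variation near each $t>0$) is unchanged. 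Second, the appeal to a \emph{vector-valued} Dirichlet--Jordan test is legitimate but not completely standard textbook material: the scalar proof carries over verbatim because it only uses the Riemann--Lebesgue lemma (valid in $L^1(\R,X)$) and properties of the Dirichlet kernel, and the jump of $g$ at $s=0$ is irrelevant for inversion at points $t>0$ where $g$ is locally Lipschitz. With these caveats acknowledged, your argument is a complete and self-contained derivation of the cited result, and your closing observation --- that the principal-value formulation is forced by the borderline decay $\|R(w-i\tau)u\|=O(|\tau|^{-1})$ coming from $R(\zeta)u=\zeta^{-1}(R(\zeta)Au-u)$ --- correctly identifies why the hypothesis $u\in\dom(A)$ cannot be dropped.
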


Unfortunately, the integral does not converge absolutely. We will solve this issue by using integration by parts and the explicit estimates obtained by Corollary~\ref{cor:resofT}.

\begin{proposition}\label{prop:semigrouprestriction}
 $T_\pi(x)$ generates a contraction semigroup $e^{-tT_\pi(x)}$ for real $x$, $\mc H_\pi$ is $e^{-tP_\g}$-invariant, and we have $$\left. e^{-tP_\g}\right|_{\mc H_\pi} = e^{-(t\frac{\g^2}2)T_\pi(2\g\inv)}.$$
\end{proposition}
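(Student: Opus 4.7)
The idea is to bootstrap the statement from Proposition~\ref{prop:kbb}, which already gives that $P_\g$ generates a $C_0$-contraction semigroup on all of $L^2(S\M)$. Since $T_\pi(-2\g\inv) = \frac{2}{\g^2}P_\g|_{\mc H_\pi}$, it suffices to show that the Hilbert-space decomposition $L^2(\G\backslash G) = \bigoplus_\pi m(\pi)\,\mc H_\pi$ of Theorem~\ref{thm:decomp} is preserved by $e^{-tP_\g}$ and that the restricted generator equals $\frac{\g^2}{2}T_\pi(-2\g\inv)$ (I read the sign $2\g\inv$ in the statement as a typo for $-2\g\inv$, consistent with $P_\g=\tfrac{\g^2}{2}T(-2\g\inv)$ used throughout Section~\ref{sec:der}).

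First I would verify that the orthogonal projection $E_\pi\colon L^2(\G\backslash G)\to m(\pi)\mc H_\pi$ commutes with $P_\g$ on its full domain. Because $m(\pi)\mc H_\pi$ is a closed $G$-invariant subspace, $E_\pi$ commutes with every $R(g)$, and hence with the infinitesimal generators $H=dR(H)$ and $\Xi=dR(\Xi)$ of the associated one-parameter subgroups on smooth vectors. Consequently $E_\pi$ commutes with $P_\g=-\g H-\tfrac{\g^2}{2}\Xi^2$ on $C^\infty(\G\backslash G)$. Using that $P_\g$ is the closure of its restriction to $C^\infty$ (Proposition~\ref{prop:kbb}), a standard density/closedness argument extends the commutation to $\dom(P_\g)$: for $f\in\dom(P_\g)$, pick smooth $f_n\to f$ with $P_\g f_n\to P_\g f$, apply $E_\pi$, and use closedness of $P_\g$.

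Second, once $E_\pi$ commutes with $P_\g$, it commutes with every resolvent $(\lambda+P_\g)\inv$ for $\lambda$ in the right half-plane, and therefore, via the Hille--Yosida formula $e^{-tP_\g}=\lim_n(1+\tfrac{t}{n}P_\g)^{-n}$, it commutes with $e^{-tP_\g}$. This proves invariance of $\mc H_\pi$ under the semigroup, so the restriction $e^{-tP_\g}|_{\mc H_\pi}$ is a $C_0$-contraction semigroup on $\mc H_\pi$ (contraction inherited from the ambient one). Its generator is the part of $P_\g$ in $\mc H_\pi$, which by Proposition~\ref{prop:geometricoperators} coincides with $\tfrac{\g^2}{2}T_\pi(-2\g\inv)$. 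This gives the semigroup identity and, as a by-product, shows that $T_\pi(x)$ generates a contraction semigroup for every $x<0$ by rescaling ($cA$ generates a contraction semigroup iff $A$ does, for $c>0$). For $x=0$ the claim is trivial, and for $x>0$ one uses that $H^*=-H$ on any unitary representation, so $T_\pi(x)^*=-\Xi^2-xH=T_\pi(-x)$, together with the fact that on a Hilbert space the adjoint of a $C_0$-contraction semigroup is again a $C_0$-contraction semigroup.

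The main obstacle is the first step: promoting the intertwining $E_\pi P_\g = P_\g E_\pi$ from the smooth vectors (where it is automatic because $E_\pi$ commutes with the group action) to the full, hypoelliptically-defined domain $\dom(P_\g)$. Everything after that — passing to the resolvent, then to the semigroup, identifying the restricted generator, and sweeping up the remaining real values of $x$ via rescaling and duality — is routine once this commutation is in hand.
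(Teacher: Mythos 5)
Your proof is correct, and the key idea is the same as the paper's: $\mc H_\pi$ is a closed $G$-invariant subspace, so it is preserved by $P_\g$ (which lives in the right $\mf g$-action), hence by the resolvent, hence by the semigroup. The technical route differs slightly and, to your credit, is more explicit. The paper invokes the Bromwich-type inversion formula (Proposition~\ref{prop:inversion}) together with the fact that $\dom(T_\pi(x))=\mc H^2_\pi$ is dense, leaving the invariance of $\mc H_\pi$ under $R(\zeta)$ and hence under $e^{-tP_\g}$ largely implicit; you instead prove directly that the orthogonal projection $E_\pi$ commutes with $P_\g$ on $C^\infty$, extend this to $\dom(P_\g)$ via the closure statement in Proposition~\ref{prop:kbb}, and then push the commutation through resolvents and the Hille--Yosida approximants. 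This fills in exactly the step the paper glosses over and would be a welcome clarification. Your handling of the ``for all real $x$'' clause is also fine, though slightly roundabout: you do not really need the adjoint/duality argument for $x>0$, since the accretivity $\Re\langle P_\g f,f\rangle=\tfrac{\g^2}{2}\sum\|X_jf\|^2\ge 0$ and the entire argument in the appendix use only $\g^2>0$ and the skew-symmetry of $X$, so $P_\g$ generates a contraction semigroup for every real $\g\neq 0$, and $T_\pi(x)=\tfrac{x^2}{2}P_{-2x^{-1}}|_{\mc H_\pi}$ covers both signs at once (with $x=0$ trivial). You also correctly identified the sign typo: the exponent in the statement should read $T_\pi(-2\g\inv)$, consistent with $P_\g=\tfrac{\g^2}{2}T(-2\g\inv)$.
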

\begin{proof}
 Since $T_\pi(x)$ is the restriction of a multiple of $P_{-2x\inv}$ it generates a contraction for real $x$ as well. The last statements follow from Proposition~\ref{prop:inversion} with the observation that $\dom(T_\pi(x)) =\mc H _\pi^2$ is dense in $\mc H_\pi$.
\end{proof}

We are now going to analyse the decay rate of $e^{-tP_\g}$ restricted to a fixed unitary representation. 

\begin{theorem}\label{thm:contractionpi}
 Let $\pi$ be a complementary or principle series representation, $\mu(x)$ the eigenvalue of $T_\pi(x)$ from  Theorem \ref{thm:specT}, $r_\pi = (\lambda_\pi +6)^{-1/2}$   and $$P(x)=-\frac 1{2\pi i} \int_{|\zeta|=1/2} R(\zeta,x)\, d\zeta, \qquad |x|<r_\pi,$$ the projection onto the eigenspace corresponding to $\mu(x)$.
 Then we have 
 \[e^{-tT_\pi(x)} u = e^{-\mu(x)t} P(x)u + \frac 1t \frac{1}{2\pi i} \int_{1/2-i\infty}^{1/2+i\infty}e^{-\zeta t}R_\pi(\zeta,x)^2u\,d\zeta\]
 for all $u\in \mc H_\pi^2$, $x\in \R$ with $|x|<r_\pi$. Furthermore,
 \[\|e^{-tT_\pi(x)} u - e^{-\mu(x)t} P(x)u \| \leq \frac 4t e^{-t/2}\|u\|\]
 if $|x|\leq r_\pi/2$.
 
 If $\pi$ is a discrete series representation $\pi_{\pm 2n}^\pm$ we have 
 \[e^{-tT_\pi(x)} u = \frac 1t \frac{1}{2\pi i} \int_{1/2-i\infty}^{1/2+i\infty}e^{-\zeta t}R_\pi(\zeta,x)^2u\,d\zeta\] and  \[\|e^{-tT_\pi(x)} u  \| \leq \frac 2{t(n^2-1/2)} e^{-t/2}\|u\|\leq \frac 4t e^{-t/2}\|u\|\quad \text{if} \quad |x|\leq \frac 1{2\sqrt{32}}.\]
\end{theorem}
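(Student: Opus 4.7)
The plan is to derive the stated representation by combining Proposition~\ref{prop:inversion} with one integration by parts and a contour shift, and then to bound the remainder using the resolvent estimates of Corollary~\ref{cor:resofT}. For real $x$ in the allowed range, $T_\pi(x)$ generates a strongly continuous contraction semigroup by Proposition~\ref{prop:semigrouprestriction}, and since $\dom(T_\pi(x))=\mc H_\pi^2$, Proposition~\ref{prop:inversion} gives for any $u\in\mc H_\pi^2$ and any $w<0$
\[e^{-tT_\pi(x)}u = \frac{1}{2\pi i}\lim_{N\to\infty}\int_{w-iN}^{w+iN} e^{-\zeta t} R_\pi(\zeta,x)u\, d\zeta.\]
Integration by parts using $\tfrac{d}{d\zeta}R_\pi(\zeta,x)=R_\pi(\zeta,x)^2$ gains a factor of $1/t$ and replaces $R_\pi$ by $R_\pi^2$; the boundary terms vanish in the $N\to\infty$ limit because $\|R_\pi(w\pm iN,x)\|=O(N^{-1})$ by Corollary~\ref{cor:resofT}, and the resulting integrand $e^{-\zeta t}R_\pi(\zeta,x)^2 u$ has norm $O(|\zeta|^{-2})$ so the integral is absolutely convergent.

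Next I would shift the contour from $\Re\zeta=w$ to $\Re\zeta=1/2$ by integrating around a rectangle with corners $w\pm iN$ and $1/2\pm iN$; the horizontal edges contribute $O(N^{-2})$ and disappear in the limit. By Theorem~\ref{thm:specT}, in the strip $w<\Re\zeta<1/2$ the only pole of $R_\pi(\zeta,x)$ is the simple pole at $\mu(x)$ in the principal/complementary series case, and there is no pole at all in the discrete series case. Using the identity $R_\pi(\zeta,x)P(x)=-P(x)/(\zeta-\mu(x))$ (which follows from $T_\pi(x)P(x)=\mu(x)P(x)$) together with $P(x)^2=P(x)$, the Laurent expansion of $R_\pi^2$ at $\mu(x)$ takes the form $P(x)/(\zeta-\mu(x))^2$ plus a holomorphic term, so that the residue of $e^{-\zeta t}R_\pi(\zeta,x)^2$ at $\mu(x)$ equals $-t\, e^{-\mu(x)t}P(x)$. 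The residue theorem then yields precisely the claimed identity, with the residue contribution absent in the discrete series case.

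To estimate the remaining integral on $\Re\zeta=1/2$, I would apply Corollary~\ref{cor:resofT}: for $|x|\leq r_\pi/2$ one has $(1-|x|\sqrt{\lambda_\pi+6})^{-1}\leq 2$, so $\|R_\pi(\zeta,x)^2\|\leq 4|\zeta|^{-2}$, and parametrizing $\zeta=1/2+is$ gives
\[\left\|\int_{1/2-i\infty}^{1/2+i\infty} e^{-\zeta t}R_\pi(\zeta,x)^2 u\, d\zeta\right\| \leq 4e^{-t/2}\|u\|\int_{-\infty}^{\infty}\frac{ds}{1/4+s^2} = 8\pi e^{-t/2}\|u\|,\]
which after division by $2\pi t$ produces the claimed bound $(4/t)e^{-t/2}\|u\|$. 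For the discrete series the same argument applies with the sharper resolvent bound $\|R_\pi(\zeta,x)\|\leq 2|\zeta-n^2|^{-1}$ from Corollary~\ref{cor:resofT}(ii), and the integral $\int_{\R}ds/(s^2+(n^2-1/2)^2)=\pi/(n^2-1/2)$ then yields the improved constant $2/(t(n^2-1/2))$.

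The main technical obstacle is the rigorous handling of these operator-valued contour integrals---specifically, the vanishing of the boundary terms at $\Im\zeta=\pm\infty$ arising from integration by parts and of the horizontal edges of the rectangular contour used in the shift. Both vanish as a direct consequence of the same pointwise resolvent bound of Corollary~\ref{cor:resofT}, so once those estimates are in hand the rest reduces to standard residue calculus transposed to the operator-valued setting.
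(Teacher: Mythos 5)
Your proof is correct and follows essentially the same route as the paper: express $e^{-tT_\pi(x)}$ via the inverse Laplace transform of Proposition~\ref{prop:inversion}, shift the contour from $\Re\zeta=w<0$ to $\Re\zeta=1/2$ picking up the residue at $\mu(x)$, use integration by parts to gain the factor $1/t$ and pass from $R_\pi$ to $R_\pi^2$, and bound the remaining vertical integral by the resolvent estimates of Corollary~\ref{cor:resofT}. The only cosmetic difference is the order of operations: the paper first shifts the contour (so the residue comes from the \emph{simple} pole of $R_\pi(\zeta,x)$ and equals $-P(x)$ directly from the definition of $P(x)$, with integration by parts then applied on the line $\Re\zeta=1/2$), whereas you integrate by parts first and therefore compute the residue of the \emph{double} pole of $R_\pi(\zeta,x)^2$ via its Laurent expansion $P(x)/(\zeta-\mu(x))^2$ plus holomorphic; your argument for that residue is valid and yields the identical formula and bounds.
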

\begin{proof}
 From Proposition \ref{prop:inversion} we obtain that 
 \[e^{-tT_\pi(x)}u  = \frac 1 {2\pi i} \lim_{n\to \infty} \int _{w-in}^{w+in} e^{-\zeta t} R_\pi(\zeta,x) u\,d\zeta\]
 if $w<0$ and $u\in \dom(T_\pi(x))=\mc H_\pi^2$. 
 Since $|x|<r_\pi$ we infer with Theorem \ref{thm:specT} that $\sigma(T_\pi(x)) \cap \{\Re \zeta \leq 1/2\} = \{\mu(x)\}$ and $|\mu(x)|<1/2$.  
 Hence the only pole of $R_\pi(\zeta,x)$ in the considered domain is $\mu(x)$  which has order 1.
 Applying the residue theorem we get
 \begin{align*}\int _{w-in}^{w+in} e^{-\zeta t} &R_\pi(\zeta,x) u\,d\zeta + \int _{w+in}^{1/2+in} e^{-\zeta t} R_\pi(\zeta,x) u\,d\zeta  + \int _{1/2+in}^{1/2-in} e^{-\zeta t} R_\pi(\zeta,x) u\,d\zeta  \\ &+\int _{1/2-in}^{w-in} e^{-\zeta t} R_\pi(\zeta,x) u\,d\zeta= -2\pi i \res _{\zeta =\mu(x)}( e^{-\zeta t} R_ \pi (\zeta ,x)u)\end{align*}
 By Corollary \ref{cor:resofT} (i) we have \[
        \int _{w\pm in}^{1/2 \pm in} e^{-\zeta t} R_\pi(\zeta,x) u\,d\zeta \stackrel{n\to\infty}{\longrightarrow} 0.
       \]

Integration by parts yields
\begin{align*}
 \int _{1/2-in}^{1/2+in}& e^{-\zeta t} R_\pi(\zeta,x) u\,d\zeta=t\inv \int _{1/2-in}^{1/2+in} e^{-\zeta t}\frac{d}{d\zeta} R_\pi(\zeta,x) u\,d\zeta -\left. t\inv e^{-\zeta t}R_\pi(\zeta,x)u\right|_{1/2-in}^{1/2+in}\\
 &=t\inv \int _{1/2-in}^{1/2+in} e^{-\zeta t} R_\pi(\zeta,x)^2 u\,d\zeta-\left. t\inv e^{-\zeta t}R_\pi(\zeta,x)u\right|_{1/2-in}^{1/2+in}.
\end{align*}

Using Corollary \ref{cor:resofT} (i) we furthermore calculate for $|x|\leq r_\pi/2$:
\begin{align*}
&\lim_{n\to\infty} \left\| \int _{1/2-in}^{1/2+in} e^{-\zeta t} R_\pi(\zeta,x) u\,d\zeta\right\| = \lim_{n\to\infty} \left\|t\inv \int_{1/2-in}^{1/2+in}e^{-\zeta t}R_\pi(\zeta,x)^2u\,d\zeta\right\| \\&  \leq t\inv \int_{-\infty}^\infty e^{-t/2} \|R_\pi(1/2 + is,x)\|^2 \,ds\|u\| \leq 4 t\inv e^{-t/2} \int_{-\infty}^\infty \frac 1{1/4 +s^2}\,ds \|u\| \\
&= 8 t\inv e^{-t/2} \int_{-\infty}^\infty \frac 1{1 +s^2}\,ds \|u\|=8\pi t\inv  e^{-t/2}\|u\|.
\end{align*}
In particular, the limit exists.
Notice that 
\begin{align*}
\res&_{\zeta =\mu(x)}( e^{-\zeta t} R_ \pi (\zeta ,x)u) =e^{-\mu(x) t}\res_{\zeta =\mu(x)}(  R_ \pi (\zeta ,x)) u  \\
&=e^{-\mu(x) t}\frac 1 {2\pi i} \int _{|\zeta|=1/2}  R_\pi(\zeta,x)\,d\zeta u= -e^{-\mu(x) t}P(x)u\end{align*}
as the pole has order 1.

Hence,
\[e^{-tT_\pi(x)}u = e^{-\mu(x)t}P(x)u + \frac 1t \frac{1}{2\pi i} \int_{1/2-i\infty}^{1/2+i\infty}e^{-\zeta t}R_\pi(\zeta,x)^2u\,d\zeta \] and the estimate follows from the above calculation.

For the case of a discrete series representation the proof is the same except that we do not collect a residue and  use the estimate of Corollary \ref{cor:resofT} (ii).
\end{proof}

With the decomposition of $L^2(S\M)$ we will now prove Theorem \ref{thm:convergencetoequilibrium}.

\begin{proof}[Proof of Theorem \ref{thm:convergencetoequilibrium}]
 Recall that $L^2(S\M)$ decomposes discretely by Theorem \ref{thm:decomp}. Let $f_\pi$ be the projection of $f$ on $m(\pi)\mc H_\pi$. If $\pi$ is a complementary or principle series representation it corresponds to the eigenvalue $\eta = \frac 14 \lambda_\pi>0$ of $\Delta_\M$. In this case we write $f_\eta$ instead of $f_\pi$. If $\eta=0$ we define $f_\eta$ to be the orthogonal projection of $f$ onto the trivial representation in $L^2(S\M)$.
 With the norm of Section \ref{sec:hyperbolicsurfaces} we have
 \begin{align*}
  C^2\geq&\, \|(-H^2-B^2-\Xi^2)f\|_{L^2(S\M)}^2 \\\geq &\sum_{\eta\in\sigma(\Delta_\M)} \|(-H^2-B^2-\Xi^2)f_\eta\|^2
  +\sum_{\pi =\pi_{\pm2n}^\pm} \|(-H^2-B^2-\Xi^2)f_\pi\|^2\\
  \geq& \sum_{\eta\in\sigma(\Delta_\M)} \|(\eta-2\Xi^2)f_\eta\|^2 \\
  =&  \sum_{\eta\in\sigma(\Delta_\M)} \langle (\eta-2\Xi^2)f_\eta,(\eta-2\Xi^2)f_\eta \rangle\\
  \geq & \sum_{\eta\in\sigma(\Delta_\M)} \eta^2\|f_\eta\|^2
 \end{align*}
 since $-\Xi^2$ is a positive operator.
 
 Thus,
 \begin{align*}
  \sum_ {\eta>C\varepsilon\inv} \|f_\eta\|^2\leq  (C\varepsilon\inv)^{-2} \sum_ {\eta>C\varepsilon\inv} \eta^2\|f_\eta\|^2\leq(C\varepsilon\inv)^{-2} \sum_ {\eta\in\sigma(\Delta_\M)} \eta^2\|f_\eta\|^2\leq \varepsilon^2.
 \end{align*}
Because of $\|e^{-tP_\g}\|\leq 1$ we obtain \begin{equation}
                                             \bigg \|e^{-tP_\g}\sum_ {\eta>C\varepsilon\inv} f_\eta\bigg\|\leq \varepsilon.\label{eq:gleichung}
                                            \end{equation}

We define $\lambda_\eta (\g)\coloneqq \frac{\g^2}2 \mu_\eta(-2\g\inv)$ where $\mu_\eta(x)$ is the eigenvalue of $T_\pi(x)$ obtained in Theorem \ref{thm:specT} and $\pi$ is the representation corresponding to $\eta\in\sigma(\Delta_\M)$ (see Theorem \ref{thm:evofPg}). 
Furthermore, let $\Pi_{\lambda_\eta(\g)}$ be the projection onto the eigenvalue $\lambda_\eta (\g)$ given by $$\Pi_{\lambda_\eta(\g)} f = P_\eta(-2\g\inv)f_\eta\quad \text{with}\quad P_\eta(x)=-\frac 1{2\pi i} \int_{|\zeta|=1/2} R_\pi(\zeta,x)\, d\zeta.$$ 
Note that $\|P_\eta(x)\|\leq 2$ for $|x|\leq (2\sqrt{4\eta+6})\inv$ by Corollary \ref{cor:resofT}. If $\eta=0$ we write  $\lambda_\eta(\g)=0$ and $\Pi_{\lambda_\eta(\g)}f = f_\eta$ and it holds $e^{-tP_\g}f_\eta =f_\eta$.

Then we have by Proposition \ref{prop:semigrouprestriction}  and Theorem \ref{thm:contractionpi} 

\begin{align*}
\bigg \|e^{-tP_\g}f -&\sum_{\stackrel{\eta \in \sigma(\Delta_\M)}{  \eta \leq C{\varepsilon}\inv}}e^{-t \lambda_\eta (\g)}\Pi_{\lambda_\eta(\g)} f \bigg \|_{L^2(S\M)}^2=\sum_{\stackrel{\eta \in \sigma(\Delta_\M)}{  \eta \leq C{\varepsilon}\inv}}\bigg \|e^{-tP_\g}f_\eta -e^{-t \lambda_\eta (\g)}\Pi_{\lambda_\eta(\g)} f_\eta \bigg \|^2\\
& \phantom\leq+ \bigg \|e^{-tP_\g}\sum_ {\eta>C\varepsilon\inv} f_\eta\bigg\|^2 + \sum_{\pi =\pi_{\pm2n}^\pm } \bigg \|e^{-tP_\g}f_\pi\bigg\|^2\\
&\overset{\text{Eq. \eqref{eq:gleichung}}}\leq   \varepsilon^2 + \sum_{\stackrel{\eta \in \sigma(\Delta_\M)}{  \eta \leq C{\varepsilon}\inv}}\bigg \|e^{-t\frac{\g^2}2T_\eta (-2\g\inv)}f_\eta -e^{-t \frac{\g^2}2\mu_\eta(-2\g\inv)}P_\eta (-2\g\inv) f_\eta \bigg \|^2\\
&\phantom= +\sum_{\pi =\pi_{\pm2n}^\pm } \bigg \|e^{-t\frac{\g^2}2T_\pi(-2\g\inv)}f_\pi\bigg\|^2\\
&\overset{\text{Thm. \ref{thm:contractionpi}}}\leq \varepsilon^2 + \sum_{\stackrel{\eta \in \sigma(\Delta_\M)}{  \eta \leq C{\varepsilon}\inv}}\left(\frac 8{t\g^2} e^{-t\g^2/4}\right)^2\|f_\eta\|^2 + \sum_{\pi =\pi_{\pm2n}^\pm } \left(\frac 8{t\g^2} e^{-t\g^2/4}\right)^2\|f_\pi\|^2\\
&\leq \varepsilon^2 +\left(\frac 8{t\g^2} e^{-t\g^2/4}\right)^2 \|f\|_{L^2(S\M)}^2
\end{align*}for every $\g >  \max \{4\sqrt{4C{\varepsilon} \inv +6} , 4\sqrt{32}\}$ where we have used Proposition~\ref{prop:semigrouprestriction} in the first inequality.
\end{proof}
We end this section with the proof of Corollary~\ref{cor:equilibrium}

\begin{proof}
 By Theorem~\ref{thm:evofPg} we have $|\lambda_{\eta}(\g) - \eta|\leq B\inv  $ for all eigenvalues $\eta \leq C\varepsilon\inv$ and $\g > 4B(4C{\varepsilon} \inv +6)^{3/2}$. In particular, $\Re \lambda_{\eta}(\g) \geq \eta_1 - B\inv$. Hence by Theorem~\ref{thm:convergencetoequilibrium},
 \begin{align*}
  \norm{e^{-tP_\g}f - \int_{S\M} f d\mu}_{L^2(S\M)} &\leq \varepsilon + \frac 8 {\g^2t}e^{-\g^2t/4} \|f\|_{L^2(S\M)} \\&\phantom =+ \sum_{\substack{\eta \in \sigma(\Delta_\M) \\ 0 \neq \eta \leq C{\varepsilon}\inv}}\bigg \|e^{-t \lambda_\eta (\g)}\Pi_{\lambda_\eta(\g)} f \bigg \|_{L^2(S\M)}.
 \end{align*}
Furthermore, we have $$\|e^{-t \lambda_\eta (\g)}\Pi_{\lambda_\eta(\g)} f  \|_{L^2(S\M)}\leq 2 e^{-t(\eta_1-B\inv)}\norm{f}_{L^2(S\M)}.$$
and by the Weyl law $$\sup_N \frac {\# \{\eta \in \sigma (\Delta_\M)\mid \eta\leq N\}}N <\infty.$$
This completes the proof.
\end{proof}

\appendix
\section{\texorpdfstring{Proof of Proposition \ref{prop:kbb}}{Proof of Proposition 2.1}}
The proof that $P_\g$ is hypoelliptic with the subelliptic estimate can be found in \cite[Chapter 2.2]{alexis}. There exist vector fields $X_j$ on $S\M$ such that $\Delta_\bbS = -\sum_{j=1}^d X_j^2$ and $\operatorname{div} X_j = 0$ (see \cite[\S 2.2.6]{alexis}). Hence, the $X_j$ as well as $X$ are skew-symmetric with respect to the inner product of $L^2(S\M)$.  It follows that $\Re \langle P_\g f, f\rangle = \sum \frac 12\g^2 \langle X_j f, X_j f\rangle - \g\Re \langle X f , f \rangle\geq 0$, i.e. $P_\g|_{C^\infty}$ is accretive since $\langle X f, f\rangle \in i\R$.

We show that $\operatorname{Ran}(P_\g|_{C^\infty} + I)$ is dense following the proof of \cite[Prop.~5.5]{witten}. Let $f \in \operatorname{Ran}(P_\g|_{C^\infty} + I)^\perp$. Then we have $\langle f, (P_\g + I)u\rangle = 0$ for all $u \in C^\infty$, hence $(P_{-\g} + I)f = 0$ in $\mc D'$. Since $P_{-\g}$ is hypoelliptic, it follows that $f\in C^\infty$ and $0= \sum \frac 12 \g^2\langle X_j f,X_j f\rangle + \langle f,f \rangle - \g\langle Xf ,f\rangle$. Thus $f = 0$. 

We obtain that the closure $\overline {P_\g|_{C^\infty}}$ is maximal-accretive (see e.g. \cite[Thm.~5.4]{witten}).
An operator $A$ on a Hilbert space is maximal-accretive iff it generates a contraction semigroup $e^{-tA}$ (see \cite[p. 241]{reedsimon}). Hence, $\overline {P_\g|_{C^\infty}}$ generates a contraction semigroup $e^{-tP_\g}$. The adjoint semigroup $(e^{-tP_\g})^\ast$ is generated by $(P_\g|_{C^\infty })^\ast$ that is $\frac 12 \g^2 \Delta_\bbS + \g X$ with domain $\{f \in L^2\mid (\frac 12 \g^2 \Delta_\bbS + \g X)f \in L^2\}$ (see \cite[I.5.14 and II.2.5]{engelnagel}). In particular, this operator is maximal-accretive. In analogy we infer that both $\overline{P_\g|_{C^\infty}}$ and $P_\g$ are maximal-accretive and we conclude that they coincide. Similar arguments can be found in \cite{GS14}.

For the positivity of the generated contraction semigroup we have to check if $$\langle (\operatorname{sign}f) P_\g f, u\rangle \geq \langle |f|, (P_\g)^\ast u\rangle$$ for all real $f\in C^\infty$ and a strictly positive subeigenvector $u$ of $(P_\g)^\ast$ (see \cite[C-II Cor. 3.9]{arendt}). Note that $1$ is a strictly positive eigenvector of $(P_\g)^\ast$ and $\frac 12 \Delta_\bbS (x)$ as well as $- X$ generate stochastic Feller processes on $S_x\M$ and $S\M$ respectively (namely the Brownian motion on $S_x\M$ and the  geodesic flow). Hence, $e^{-t\Delta_\bbS(x)}$ and $e^{tX}$ define positive semigroups so that $\langle (\operatorname{sign} f) \Delta_\bbS(x)f, 1\rangle_{S_x\M}\geq 0$ for $f\in C^\infty (S_x\M)$ and $\langle (\operatorname{sign} f) (-X)f, 1\rangle\geq 0$ for $f\in C^\infty(S\M)$ (see \cite[C-II Thm.2.4]{arendt}). Combining both statements completes the proof.

\section{\texorpdfstring{Proof of Proposition \ref{prop:geometricoperators}}{Proof of Proposition 2.4}}
Since $\H \to \M$  is a local diffeomorphism it suffices to consider $S\H$ instead of $S\M$.
 Let $f\in C^\infty(S\H)$. $\g(t)\coloneqq e^t i$ is a geodesic in $\H$ with $\g(0)=i$ and $\dot\g(0)=i\in T_i\H$. Hence, $$\phi_t(i,i)=(e^ti, e^ti)=(a_t i, T_i a_t i)=a_t (i,i) = \exp(tH) (i,i)$$ where $a_t=\operatorname{diag}(e^{t/2},e^{-t/2})=\exp(tH).$ Using that $\isom^+(\H)$ commutes with $\phi_t$ and the identification $G\simeq S\H$ we deduce $\phi_t(x)=x\exp(tH)$ for all $x\in G$. We conclude \[Xf(x)=\d f(\phi_t(x))=\d f(x\exp(tH))=Hf(x).\]

 For the spherical Laplacian let $(z,v)=g.(i,i)=(g. i, T_ig\,i)\in S\H$.
 Then\begin{align*}
 -\Delta_\bbS f(z,v)&=\d\d[s]f(z,v\cdot e^{it}e^{is})&&     =\d\d[s]f(gi,T_ig(ie^{it}e^{is}))\\
 &=\d\d[s]f(g.(i,ie^{it}e^{is}))&
 &=\d\d[s]f(g\exp(t\Xi)\exp(s\Xi)(i,i))\\
 &=\Xi^2f(z,v).&&
 \end{align*}
For the last part we observe that $-H^2-B^2=\frac 14 \Omega - \Xi^2$ is $K$-invariant since $\Omega\in Z(\mc U(\mf g))$ and $\operatorname{Lie}(K)=\R\Xi$. Let $f\in C^\infty(\H)=C^\infty(G/K)=C^\infty(G)^K$, i.e. $f\in C^\infty(S\H)$ and $f$ is constant in each fiber. By abuse of notation we will use the same symbol $f$ in  every  isomorphic space. We calculate
\[Hf(e_G)=\d f(\exp(tH)) = \d f((e^t i,e^t i)) = \d f(e^t i) = \frac{\partial}{\partial y}f (i).\]
Let $B=E+F$ where $E$ is the upper triangular part of $B$ and $F$ the lower triangular part. Then we have 
\[2Ef(e_G)=\d f(\exp(2tE)) = \d f(i+t) = \frac \partial {\partial x} f(i)\]
and 
\[2Ff(e_G)=\d f(\exp(2tF)) = \d f(i(it+1)\inv)= \frac\partial {\partial x} f(i)\] since $\d i(it+1)\inv =1$.
To sum up, \[(-H^2-B^2)f(i)=-\left(\frac\partial {\partial y}\right)^2 - \left(\frac\partial {\partial x}\right)^2f(i) =\Delta_\H f (i).\] As both operators are (left) $G$-invariant they have to coincide.

% \bibliography{literatur}
% \bibliographystyle{amsalpha}

\newcommand{\etalchar}[1]{$^{#1}$}
\providecommand{\bysame}{\leavevmode\hbox to3em{\hrulefill}\thinspace}
\providecommand{\MR}{\relax\ifhmode\unskip\space\fi MR }
% \MRhref is called by the amsart/book/proc definition of \MR.
\providecommand{\MRhref}[2]{%
  \href{http://www.ams.org/mathscinet-getitem?mr=#1}{#2}
}
\providecommand{\href}[2]{#2}

\end{document}